\documentclass[11pt,amssymb]{amsart}
\usepackage{amssymb}
\usepackage{amsmath}
\usepackage[mathscr]{eucal}

\newcommand{\Z}{{\mathbb Z}}

\newcommand{\Q}{{\mathbb Q}}

\newcommand{\R}{{\mathbb R}}

\newcommand{\Ga}{\mathrm{Gal}}
\newtheorem{thm}{Theorem}[section]
\newtheorem{lemma}[thm]{Lemma}
\newtheorem{prop}[thm]{Proposition}
\newtheorem{cor}[thm]{Corollary}

\newcommand{\Hom}{\mathrm{Hom}}

\newcommand{\Fpm}{\mathrm{(F}_m'\mathrm{)}}
\newcommand{\tF}{\mathrm{(F)}}
\newcommand{\he}{H_{\mathrm{\acute{e}t}}}
\newcommand{\hhe}{H^{\mathrm{\acute{e}t}}}

\usepackage[all,cmtip]{xy}
\hoffset=-2.5cm \textwidth=17cm \voffset=-2cm \textheight=24.4cm
.240pk scaled 1200 .240pk
\begin{document}

\title[Finiteness of unramified cohomology]{On some finiteness results for unramified cohomology}

\author[I.~Rapinchuk]{Igor A. Rapinchuk}

\begin{abstract}
We obtain several finiteness results for the unramified cohomology of function fields of algebraic varieties defined over fields of type $\Fpm$, a class that includes algebraically closed fields, finite fields, local fields, and some higher local fields of characteristic 0.
\end{abstract}

\address{Department of Mathematics, Harvard University, Cambridge, MA, 02138 USA}

\email{rapinch@math.harvard.edu}

\maketitle

\section{Introduction}\label{S-1}

Let $K$ be a field equipped with a discrete valuation $v$. It is well-known that for any positive integer $m$ invertible in the residue field $\kappa (v)$ and any $i \geq 1$, there exists a {\it residue map} in Galois cohomology
$$
\partial_v^i \colon H^i (K, \mu_m^{\otimes j}) \to H^{i-1} (\kappa (v), \mu_m^{\otimes (j-1)})
$$
(see the end of this section for all unexplained notations and \cite[\S 3.3]{CT-SB} for a description of several constructions of $\partial_v^i$). A cohomology class $x \in H^i (K, \mu_m^{\otimes j})$ is said to be {\it unramified at $v$} if $x \in \ker \partial_v^i.$ Furthermore, if $V$ is a set of discrete valuations of $K$ such that the maps $\partial_v^i$ exist for all $v \in V$, one defines the {\it degree $i$ unramified cohomology of $K$ with respect to $V$} as
$$
H^i (K, \mu_m^{\otimes j})_V = \bigcap_{v \in V} \ker \partial_v^i.
$$

In this note, we will be primarily concerned with unramified cohomology groups that arise in geometric situations.
Namely, suppose $X$ is a smooth irreducible algebraic variety over a field $F$ with function field $F(X).$ Then each point $x \in X$ of codimension 1 defines a discrete valuation $v_x$ on $F(X)$ that is trivial on $F$. We let
$$
V_0 = \{v_x \mid x \in X^{(1)} \}
$$
denote the set of all such {\it geometric places} of $F(X)$ and define
$$
H^i_{\mathrm{ur}} (F(X), \mu_m^{\otimes j}) = H^i (F(X), \mu_m^{\otimes j})_{V_0}
$$
for any positive integer $m$ invertible in $F$.\footnotemark \footnotetext{Another definition of unramified cohomology that is frequently encountered is
$$
H^i_{\mathrm{nr}} (F(X), \mu_m^{\otimes j}) = H^i (F(X), \mu_m^{\otimes j})_{V_1},
$$
where $V_1$ is the set of all discrete valuations $v$ of $F(X)$ such that $F$ is contained in the valuation ring $\mathcal{O}_v$.
Clearly, there is an inclusion $H^i_{\mathrm{nr}} (F(X), \mu_m^{\otimes j}) \subset H^i_{\mathrm{ur}} (F(X), \mu_m^{\otimes j})$, which is in fact an equality if $X$ is proper (see \cite[Theorem 4.1.1]{CT-SB}). Note that by construction, $H^i_{\mathrm{nr}} (F(X), \mu_m^{\otimes j})$ is a birational invariant of $X$.}

Unramified cohomology in degree 2 (i.e. the {\it unramified Brauer group}) was initially introduced by Saltman \cite{Sa} in connection with his work on Noether's problem. Later, Colliot-Th\'el\`ene and Ojanguren \cite{CTO} gave a general definition of unramified cohomology, which they then employed to produce new examples of algebraic varieties that are unirational but nonrational. Since then, unramified cohomology has become an important tool in a variety of problems involving algebraic groups, division algebras, quadratic forms, and rationality, among others (see, e.g., \cite{CT-SB}, \cite{GaMeSe}, \cite{M}, \cite{Salt}).

Over the last several years, unramified cohomology has been applied to the study of division algebras having the same maximal subfields, and, more generally, linear algebraic groups having the same isomorphism classes of maximal tori. We refer the reader to \cite{CRR3} for a detailed overview of these problems, as well as a description of recent results and conjectures. A question that comes up in this context is

\vskip2mm

$(\dag)$ \parbox[t]{15.5cm}{{\it Given a ``nice" field $K$, does there exist some naturally defined set $V$ of discrete valuations of $K$ such that the groups $H^i (K, \mu_m^{\otimes j})_V$ are finite for all $i \geq 1$?}}

\vskip2mm

In this note, we address $(\dag)$ for function fields of algebraic varieties over base fields satisfying the following condition. Given an integer $m \geq 1$, we will say that a field $K$ is {\it of type $\Fpm$} if
\vskip2mm

$\Fpm$ \ \ \ \parbox[t]{15.5cm}{{\it For every finite separable extension $L/K$, the quotient $L^{\times}/{L^{\times}}^m$ is finite.}}

\vskip2mm
\noindent Some common examples of fields of type $\Fpm$ include finite fields and $p$-adic fields, as well as higher local fields like $\Q_p ((t_1)) \cdots ((t_r))$ (see \S \ref{S-2}).
Our main result is

\begin{thm}\label{T-MainThm}
Let $K$ be a field and $m \geq 1$ an integer prime to $\mathrm{char}~K.$ Assume that $K$ is of type $\Fpm.$
\vskip2mm

\noindent $\mathrm{(a)}$ \parbox[t]{16cm}{Suppose $C$ is a smooth, geometrically integral curve over $K$. Then the
unramified cohomology groups $H^i_{\mathrm{ur}} (K(C), \mu_m^{\otimes j})$ are finite for all $i \geq 1.$}

\vskip3mm

\noindent $\mathrm{(b)}$ \parbox[t]{16cm}{Let $X$ be a smooth, geometrically integral algebraic variety of dimension $\geq 2$ over $K$. Then the unramified cohomology group $H^3_{\mathrm{ur}} (K(X), \mu_m^{\otimes 2})$ is finite if and only if $CH^2 (X)/m$ is finite.}

\end{thm}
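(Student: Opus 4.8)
The plan is to handle both parts through a single mechanism: realize the unramified group as a term of the coniveau (Bloch--Ogus) spectral sequence of $X$ (or $C$), so that it sits between a visibly finite group and a subquotient of the étale cohomology of $X$ itself, and then prove that those étale groups are finite.

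The first thing I would establish --- this is the role of \S\ref{S-2} --- is the cohomological consequence of $\Fpm$: if $K$ is of type $\Fpm$ with $m$ prime to $\mathrm{char}\,K$, then $H^n(K,M)$ is finite for every $n\geq 0$ and every finite $\Ga_K$-module $M$ killed by $m$. (By dévissage along a composition series one reduces to $M$ simple, hence annihilated by a prime $\ell\mid m$ and trivialized over a finite separable extension $L/K$; then $M|_L$ is a sum of copies of $\Z/\ell$. Since $[L(\mu_\ell):L]$ divides $\ell-1$ and is thus prime to $\ell$, restriction injects $H^n(L,\Z/\ell)$ into $H^n(L(\mu_\ell),\Z/\ell)\cong H^n(L(\mu_\ell),\mu_\ell^{\otimes n})$, which is finite because it is spanned by symbols from the finite group $H^1(L(\mu_\ell),\mu_\ell)=L(\mu_\ell)^\times/\ell$ via the Merkurjev--Suslin/Bloch--Kato theorem; a Hochschild--Serre argument over $L/K$ then bounds $H^n(K,M)$.) Next, I would invoke Deligne's finiteness theorem: for any $Y$ of finite type over a separably closed field, $\he^q(Y,\mu_m^{\otimes j})$ is finite for all $q$. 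Feeding both facts into the Hochschild--Serre spectral sequence $H^p(K,\he^q(Y_{K^{\mathrm{sep}}},\mu_m^{\otimes j}))\Rightarrow\he^{p+q}(Y,\mu_m^{\otimes j})$ --- only finitely many, each finite, contributions in each total degree --- yields: $\he^n(Y,\mu_m^{\otimes j})$ is finite for every $n$ and every smooth geometrically integral $K$-variety $Y$, even a non-proper one.

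For part (a), $C$ is a curve, so the Zariski sheaves $\cH^q(\mu_m^{\otimes j})$ on $C$ have cohomological dimension $\leq 1$ and the Bloch--Ogus spectral sequence $E_2^{a,b}=H^a(C,\cH^b(\mu_m^{\otimes j}))\Rightarrow\he^{a+b}(C,\mu_m^{\otimes j})$ degenerates at $E_2$. The Bloch--Ogus--Gabber Gersten resolution identifies $H^0(C,\cH^n(\mu_m^{\otimes j}))$ with the kernel of the total residue map $H^n(K(C),\mu_m^{\otimes j})\to\bigoplus_{x\in C^{(1)}}H^{n-1}(\kappa(x),\mu_m^{\otimes(j-1)})$, i.e. with $H^n_{\mathrm{ur}}(K(C),\mu_m^{\otimes j})$ exactly. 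Degeneration therefore gives a short exact sequence $0\to H^1(C,\cH^{n-1}(\mu_m^{\otimes j}))\to\he^n(C,\mu_m^{\otimes j})\to H^n_{\mathrm{ur}}(K(C),\mu_m^{\otimes j})\to 0$, exhibiting $H^n_{\mathrm{ur}}(K(C),\mu_m^{\otimes j})$ as a quotient of the finite group $\he^n(C,\mu_m^{\otimes j})$; this proves (a).

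For part (b) I would run Bloch--Ogus on $X$ with twist $j=2$, using $H^a(X,\cH^b(\mu_m^{\otimes 2}))=0$ for $a>b$, the same identification $H^0(X,\cH^3(\mu_m^{\otimes 2}))=H^3_{\mathrm{ur}}(K(X),\mu_m^{\otimes 2})$, and Bloch's formula together with Merkurjev--Suslin giving $H^2(X,\cH^2(\mu_m^{\otimes 2}))\cong CH^2(X)/m$. Tracking the $d_2$-differential $E_2^{0,3}\to E_2^{2,2}$ and the cycle class edge map in total degrees $3$ and $4$ produces an exact sequence
$$\he^3(X,\mu_m^{\otimes 2})\ \longrightarrow\ H^3_{\mathrm{ur}}(K(X),\mu_m^{\otimes 2})\ \xrightarrow{\ \delta\ }\ CH^2(X)/m\ \xrightarrow{\ cl\ }\ \he^4(X,\mu_m^{\otimes 2}),$$
exact at the two middle terms (so that $\ker cl=\mathrm{im}\,\delta$). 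As $\he^3(X,\mu_m^{\otimes 2})$ and $\he^4(X,\mu_m^{\otimes 2})$ are finite by the second step above, $\ker\delta$ (a quotient of $\he^3(X,\mu_m^{\otimes 2})$) and $\mathrm{im}\,cl$ (a subgroup of $\he^4(X,\mu_m^{\otimes 2})$) are finite; hence $H^3_{\mathrm{ur}}(K(X),\mu_m^{\otimes 2})$ is finite iff $\mathrm{im}\,\delta$ is, and $CH^2(X)/m$ is finite iff $\mathrm{im}\,\delta=\ker cl$ is, which gives the asserted equivalence. The step I expect to be the real obstacle is not this spectral-sequence bookkeeping but the first one: showing that $\Fpm$ forces all the Galois cohomology groups $H^n(K,\mu_m^{\otimes j})$ --- and, more seriously, $H^n(K,M)$ for arbitrary finite $m$-torsion $M$ --- to be finite, which is where Bloch--Kato and a careful restriction--corestriction argument enter and where the hypothesis is genuinely used.
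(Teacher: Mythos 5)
Your proposal is correct and follows essentially the same route as the paper: the $\Fpm$ hypothesis plus Bloch--Kato gives finiteness of $H^n(K,M)$ for finite $m$-torsion modules, Hochschild--Serre then gives finiteness of $\he^n(X,\mu_m^{\otimes j})$, and the Bloch--Ogus coniveau spectral sequence yields the surjection $\he^i(C,\mu_m^{\otimes j})\twoheadrightarrow H^i_{\mathrm{ur}}(K(C),\mu_m^{\otimes j})$ for curves and the four-term exact sequence $\he^3\to H^3_{\mathrm{ur}}\to CH^2(X)/m\to\he^4$ for part (b). The only cosmetic difference is your d\'evissage for the Galois-cohomology finiteness (simple modules and a restriction--corestriction passage to $L(\mu_\ell)$, versus the paper's reduction over a finite Galois extension splitting $A$ into copies of $\Z/\ell\Z(1)$ followed by Hochschild--Serre), which is an equally valid variant of the same argument.
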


These results will be proved in \S \ref{S-4}. We note that by work of Kato, Jannsen, Kerz, and Saito, more precise results are available for varieties over finite fields as well as varieties over local fields having good reduction --- see Remark 3.2.  The main tools involved will be some finiteness statements for Galois and \'etale cohomology, together with Bloch-Ogus theory,
which allows one to relate unramified cohomology to \'etale cohomology. As will be apparent, the methods owe much of their inspiration to the work of Colliot-Th\'el\`ene \cite{CT-SB}. One of our motivations was in fact to investigate to what extent the classical finiteness statements over finite and $p$-adic fields can be extended to more general situations, in particular to some base fields of large cohomological dimension.
%In fact, one of the goals of this note is to provide a brief but accessible overview of this circle of ideas and some of their possible applications.

\vskip5mm

The paper is organized as follows. In \S \ref{S-2}, we discuss fields of type $\tF$ and $\Fpm$ and establish some finiteness results for their Galois and \'etale cohomology. In \S \ref{S-3}, we review the definition and several properties of Kato complexes as well as the main elements of Bloch-Ogus theory needed for the analysis of unramified cohomology.
%the construction and several important properties of the Bloch-Ogus spectral sequence.
Finally, in \S \ref{S-4}, we combine these results to prove Theorem \ref{T-MainThm}. We conclude by outlining some connections between finiteness questions for unramified cohomology and Bass's conjecture on the finite generation of $K$-groups.

\vskip10mm

\noindent {\bf Notations and conventions.} Let $X$ be a scheme. For any positive integer
%For any field $F$ and any positive integer $n$ invertible in $F$, we denote by $\mu_n$ the group of $n$th roots of unity in a fixed separable closure $\bar{F}$ of $F$.
%Next, let $X$ be an arbitrary scheme. Then for any positive integer
$n$ invertible on $X$, we let $\mu_n = \mu_{n, X}$ be the \'etale sheaf of $n$th roots of unity on $X$.
We follow the usual notations for the Tate twists of $\mu_n$. Namely, for $i \geq 0$, we set
$$
\Z / n \Z (i) = \mu_n^{\otimes i},
$$
(where $\mu_n^{\otimes i}$ is the sheaf associated to the $i$-fold tensor product of $\mu_n$), with the convention that
$$
\mu_n^{\otimes 0} = \Z/n \Z.
$$
If $i < 0$, we let
$$
\Z / n \Z (i) = \Hom (\mu_n^{\otimes (- i)}, \Z / n \Z).
$$

In the case that $X = \mathrm{Spec}~F$ for a field $F$, we identify $\mu_n$ with the group of $n$th roots of unity in a fixed separable closure $\bar{F}$ of $F$. We will also tacitly identify the \'etale cohomology of $X$ with the Galois cohomology of $F$.

Next, for any $j \geq 0$, we denote by $X_j$ and $X^{(j)}$ the sets of points of $X$ of dimension $j$ and codimension $j$, respectively. Given a point $x \in X$, we set $\kappa(x)$ to be the corresponding residue field. Finally, for a variety $X$ over a field, we denote by $CH^2(X)$ the Chow group of cycles of codimension 2 modulo rational equivalence (see, e.g. \cite[Ch. 1]{Fult}).

\vskip5mm

\noindent {\bf Acknowledgements.} I was supported by an NSF Postdoctoral Fellowship at Harvard University during the preparation of this paper. I would like to thank M.~Kerz for helpful communications regarding Kato complexes. Also, the hospitality of IHES in July 2015 is gratefully acknowledged.

\section{Finiteness conditions for Galois and \'etale cohomology}\label{S-2}

In this section, we prove some finiteness results for Galois and \'etale cohomology (generalizing those in \cite[Ch. III, \S 4]{SerreGC}) that will be applied to unramified cohomology in \S \ref{S-4}. For a field $K$, we will denote by $\bar{K}$ a separable closure and by $G_K = \Ga (\bar{K}/K)$ the absolute Galois group.

%We will work over a fixed perfect field $K$, with algebraic closure $\bar{K}$ and absolute Galois group $G_K = \Ga (\bar{K}/K)$.

Let $G$ be a profinite group. Recall that $G$ is said to satisfy Serre's condition $\tF$ if

\vskip2mm

$\mathrm{(F)}$ \ \ \ \parbox[t]{15.5cm}{{\it For every integer $m \geq 1$, $G$ has finitely many open subgroups of index $m$.}}

\vskip2mm
\noindent Furthermore, one says that a field $K$ is {\it of type $\tF$} if $K$ is perfect and $G_K$ satisfies condition $\tF.$ Notice that by the Galois correspondence, this is equivalent to the fact that for every integer $m$, $\bar{K}$ contains finitely many degree $m$ extensions of $K$.

%has finitely many subextensions that are of degree $m$ over $K$.

As shown in \cite[Ch. III, \S 4, Proposition 9]{SerreGC}, if $G_K$ is topologically finitely generated, then $K$ is automatically of type $\tF.$ We also note the following observation, which can be used to expand the list of examples of fields of type $\tF$ given in \cite[Ch. III, \S 4.2]{SerreGC}.
%is useful for constructing examples of fields of type $\tF.$

\begin{lemma}\label{L-Topfg}
Let $K$ be a field of characteristic 0 such that the absolute Galois group $G_K = \Ga (\bar{K}/K)$ is topologically finitely generated. Then for the field $L = K((t))$ of formal Laurent series over $K$, the absolute Galois group $G_L = \Ga (\bar{L}/ L)$ is also topologically finitely generated.
\end{lemma}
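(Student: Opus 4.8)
The plan is to describe the absolute Galois group of $L = K((t))$ explicitly via the structure theory of Galois groups of complete discretely valued fields and then deduce topological finite generation from that of $G_K$. First I would recall that since $K$ has characteristic $0$, the field $L = K((t))$ is a complete discretely valued field with residue field $K$, and its maximal unramified extension $L^{\mathrm{nr}}$ has Galois group $\Ga(L^{\mathrm{nr}}/L) \cong G_K$. The next step is to pass to the maximal tamely ramified extension: because $\mathrm{char}~K = 0$, \emph{all} finite extensions of $L^{\mathrm{nr}}$ are tamely ramified (there is no wild ramification), so $\bar{L}$ is itself the maximal tamely ramified extension of $L$, obtained from $L^{\mathrm{nr}}$ by adjoining all roots $t^{1/n}$. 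Standard local field theory then gives a short exact sequence of profinite groups
$$
1 \longrightarrow \hat{\Z}(1) \longrightarrow G_L \longrightarrow G_K \longrightarrow 1,
$$
where $\hat{\Z}(1) = \varprojlim \mu_n(\bar{K})$ is the inertia subgroup (the Tate twist is by the cyclotomic action of $G_K$), and in fact $G_L$ is the semidirect-product-type extension described, e.g., in Iwasawa's work or in Serre's \emph{Local Fields} for the residue-characteristic-$0$ case.

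Given this exact sequence, the argument is then purely group-theoretic: an extension of a topologically finitely generated profinite group by a topologically finitely generated profinite (normal) subgroup is topologically finitely generated. Concretely, if $\gamma \in G_L$ is any lift of a topological generator of the procyclic group $\hat{\Z}(1)$ and $g_1, \dots, g_r \in G_L$ are lifts of topological generators of $G_K$, then the closed subgroup generated by $\gamma, g_1, \dots, g_r$ surjects onto $G_K$ and contains a subgroup whose image is dense in $\hat{\Z}(1)$; a standard compactness/Frattini-type argument (the preimage of a dense subgroup of the quotient that maps onto the quotient, together with generators of the kernel, generates the whole group) shows these $r+1$ elements topologically generate $G_L$. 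Thus $G_L$ is topologically finitely generated, with $d(G_L) \le d(G_K) + 1$.

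The main obstacle — really the only substantive point — is establishing the structure of $G_L$, i.e.\ justifying the exact sequence above and, in particular, the fact that in residue characteristic $0$ there is no wild inertia so that the inertia group is the full $\hat{\Z}(1)$ rather than some more complicated pro-$p$-by-$\hat{\Z}'$ extension. This is where the characteristic $0$ hypothesis on $K$ is essential: over a residue field of characteristic $p > 0$ the wild inertia subgroup is a free pro-$p$ group of infinite rank, which is emphatically not topologically finitely generated, and the lemma would fail. I would cite the relevant structure theorem from local field theory (e.g.\ Serre, \emph{Corps Locaux}, or \cite[Ch. II]{SerreGC} together with the tame-quotient description) rather than reprove it. Once the exact sequence is in hand, the finite-generation conclusion is immediate from the group-theoretic fact recorded above.
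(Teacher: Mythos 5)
Your proposal is correct and follows essentially the same route as the paper: both identify $\Ga(\bar{L}/L^{\mathrm{ur}})\simeq\widehat{\Z}$ (no wild ramification since the residue field has characteristic $0$, and units are $n$th powers over the algebraically closed residue field, so the only degree $n$ extension is obtained by adjoining $t^{1/n}$), and then conclude from the resulting extension $1\to\widehat{\Z}\to G_L\to G_K\to 1$ that $G_L$ is topologically generated by $d(G_K)+1$ elements. The only difference is cosmetic: you cite the tame structure theorem and spell out the lifting-of-generators argument, whereas the paper verifies the $\widehat{\Z}$ claim directly via the classification of totally tamely ramified extensions $F(\sqrt[n]{\pi})$.
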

\begin{proof}
Let $F = L^{\text{ur}}$ be the maximal unramified extension of $L$. It is well known that $F$ is a discretely valued henselian field having residue field $\bar{K}$, and that $F/L$ is a Galois extension whose Galois group $\Ga (F/L)$ is naturally identified with $G_K.$ Note that in this case,
%with $\Ga (F/L) = \Ga (\bar{K}/ K).$ In more concrete terms,
$F$ can simply be described as
the compositum of $L$ and $\bar{K}$ inside $\bar{L}$, or, equivalently, the union of the fields $K' ((t))$ for all finite extensions $K'/K.$

To prove the claim, it obviously suffices to show that if $F'/F$ is a field extension of degree $n$, then $F' = F(\sqrt[n]{t}).$ Indeed, this will imply that $H = \Ga (\bar{L}/F ) \simeq \widehat{\Z}$, hence $G_L$ is topologically finitely generated, as needed. Now, a degree $n$ extension $F'/F$ is totally tamely ramified, and consequently is of the form $F(\sqrt[n]{\pi})$ for {\it some} uniformizer $\pi \in F$ (see, e.g., \cite[Ch. II, Proposition 3.5]{FV}). The required statement easily follows from the fact that the residue field of $F$ is algebraically closed.
%and then, from the exact sequence
%$$
%1 \to H \to G_L \to G_K \to 1
%$$
%one concludes that $G_L$ is topologically finitely generated, as needed.
%So, let $F'/F$ be a field extension of degree $n.$ Then $F'$ is a totally tamely ramified extension of $F$, and consequently is of the form $F(\sqrt[n]{\pi})$ for {\it some} uniformizer $\pi \in F$ (see, e.g., \cite[Ch. II, Proposition 3.5]{FV}). We can write
%$$
%\pi = a_1 t + a_2t^2 + \cdots
%$$
%with all $a_i$ contained in some $K'$ as above and $a_1 \neq 0.$ In particular, we have
%$$
%\pi = a_1 t (1 + a_1^{-1} a_2 t + \cdots).
%$$
%Clearly, all $n$th roots of $a_1$ are contained in $F$, and it follows from Hensel's lemma and the fact that the residue field is algebraically closed that $F$ also contains all $n$th roots of $b = 1 + a_1^{-1} a_2 t + \cdots$. Hence,
%$$
%\pi = t c^n,
%$$
%for an appropriate $c \in F$, and therefore $F' = F(\sqrt[n]{t})$, as claimed.
\end{proof}

Another important property of fields of type $\tF$ is given in the next statement, which is a special case of \cite[Ch. III, \S 4, Proposition 8]{SerreGC}.

\begin{lemma}\label{L-CondF}
Suppose $K$ is a field of type $\tF.$ Then for any integer $m \geq 1$ prime to $\text{char}~K$ and any finite (separable) extension $L/K$, the quotient $L^{\times}/{L^{\times}}^m$ is finite. In other words, $K$ satisfies $\Fpm$ for all $m$ invertible in $K.$
\end{lemma}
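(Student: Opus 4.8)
The plan is to reduce the statement for a general finite separable extension $L/K$ to a statement about $K$ itself, and then to deduce the finiteness of $K^\times/{K^\times}^m$ from condition $\tF$ via Kummer theory. First I would observe that since $K$ is of type $\tF$, so is every finite separable extension $L/K$: indeed, the absolute Galois group $G_L$ is an open subgroup of $G_K$, and an open subgroup of a profinite group satisfying $\tF$ again satisfies $\tF$ (a subgroup of index $d$ in $G$ has, for each $m$, only finitely many open subgroups of index $m$, since each such is open of index dividing $md$ in $G$). Hence it suffices to prove that for a field $K$ of type $\tF$ and $m$ prime to $\mathrm{char}~K$, the group $K^\times/{K^\times}^m$ is finite, and then apply this to $L$ in place of $K$.

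Next I would pass to the extension $K' = K(\mu_m)$, which is finite separable over $K$ of degree dividing $\varphi(m)$. Since $K'$ is again of type $\tF$, it is enough to bound $K'^\times/{K'^\times}^m$: the natural map $K^\times/{K^\times}^m \to K'^\times/{K'^\times}^m$ has kernel annihilated by $[K':K]$ by a standard norm/restriction-corestriction argument, hence is finite-to-one once we know finiteness upstairs (alternatively, one may simply note finiteness is inherited by subquotients up to finite kernel). So we are reduced to the case $\mu_m \subset K$. In that situation Kummer theory gives a canonical isomorphism
$$
K^\times/{K^\times}^m \;\xrightarrow{\ \sim\ }\; \Hom_{\mathrm{cont}}(G_K, \Z/m\Z),
$$
and the right-hand side is finite precisely when $G_K$ has only finitely many open subgroups of index dividing $m$ (equivalently, when $G_K/m$, the maximal pro-$m$-elementary-abelian quotient, is finite), which is exactly what condition $\tF$ furnishes.

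I expect the only genuine subtlety to be the bookkeeping in the two reduction steps — passing from $L$ to $K$ and then adjoining $\mu_m$ — where one must be careful that "type $\tF$" is preserved and that the comparison maps between the relevant $m$-torsion quotients have finite kernel and cokernel. All of this is routine: stability of $\tF$ under finite separable extensions is immediate from the Galois correspondence (as already noted in the excerpt, type $\tF$ means $\bar K$ contains finitely many degree-$d$ extensions of $K$ for each $d$, and this property clearly descends to and ascends from finite layers), and the restriction-corestriction estimate bounding kernels by the degree is standard. The heart of the matter, once $\mu_m$ is present, is simply the Kummer isomorphism together with the observation that $\Hom_{\mathrm{cont}}(G_K,\Z/m\Z)$ is finite under $\tF$; since this last statement is precisely the content of \cite[Ch.~III, \S 4, Proposition~8]{SerreGC}, the lemma follows.
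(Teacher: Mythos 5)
Your overall route is the same as the paper's: reduce to $K$ itself using that finite separable extensions inherit type $\tF$, adjoin the $m$-th roots of unity, and then use Kummer theory together with condition $\tF$ to see that $\Hom_{\mathrm{cont}}(G_{K'},\Z/m\Z)$ is finite. The one step where your argument as written does not go through is the descent from $K'=K(\mu_m)$ back to $K$. You assert that the kernel of $K^{\times}/{K^{\times}}^m \to K'^{\times}/{K'^{\times}}^m$ is annihilated by $d=[K':K]$ (true, by restriction--corestriction) and conclude that the map is ``finite-to-one once we know finiteness upstairs.'' But a group annihilated by $d$ need not be finite (an infinite elementary abelian $p$-group is killed by $p$); and since the kernel already lies in the $m$-torsion group $K^{\times}/{K^{\times}}^m$, the extra information only says it is killed by $\gcd(m,d)$, which is in general bigger than $1$ (take $m=p^2$, $K=\Q$, so $d=p(p-1)$). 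Your parenthetical alternative (``finiteness is inherited by subquotients up to finite kernel'') presupposes exactly the finiteness of the kernel that is in question, so it does not close the gap.

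The repair is standard and is precisely what the paper does: since $K'/K$ is Galois, the inflation--restriction sequence $0 \to H^1(\Ga(K'/K), \mu_m(K')) \to H^1(K,\mu_m) \to H^1(K',\mu_m)$ identifies the kernel of your comparison map with $H^1(\Ga(K'/K),\mu_m(K'))$, the cohomology of a finite group with coefficients in a finite module, hence a finite group. (Equivalently, if $a\in K^{\times}$ becomes an $m$-th power $b^m$ in $K'$, then $\sigma\mapsto \sigma(b)/b$ embeds the kernel into that $H^1$.) With this substitution in place of the norm argument, your proof is complete and is essentially the argument given in the paper, which phrases everything directly in terms of $H^1(K,\Z/m\Z(1))$ and the inflation--restriction sequence rather than via the explicit Kummer quotients.
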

\begin{proof}
It is clear from the definitions that if $K$ is of type $\tF$, then so is any finite extension $L/K$. Thus, it suffices to show that for any $m$ prime to $\text{char}~K$, the group
$$
H^1 (K, \Z / m \Z (1)) = K^{\times}/ {K^{\times}}^m
$$
is finite.

Let $L$ be a finite Galois extension of $K$ containing a primitive $m$th root of 1, so that
$\Z / m \Z (1)$ becomes isomorphic (as a $G_L$-module) to $\Z/ m \Z$ with trivial action. It follows that we have the identification
$$
H^1 (L, \Z / m \Z (1)) = \Hom_{\text{cont}} (G_L, \Z / m \Z),
$$
and the latter group is finite since $L$ is of type $\tF.$
%On the other hand, as remarked above, $L$ is of type $\tF$, from which it easily follows that there are only finitely many continuous homomorphism $G_L \to \Z / n \Z$. Thus, $H^1 (L, \Z / m \Z (1))$ is finite. We then conclude from
The inflation-restriction sequence
$$
0 \to H^1 (\Ga (L/K), \Z / m \Z (1)) \to H^1 (K, \Z / m \Z (1)) \to H^1 (L, \Z / m \Z (1))
$$
then shows that $H^1(K, \Z / n \Z (1))$ is finite as well.

\end{proof}

\vskip3mm

%As we will see in Proposition \ref{P-FieldF'} below, the proofs of our finiteness results will not require the full strength of condition $\tF$, but rather only
 %for the proofs of our finiteness results, one does not need the full strength of condition $\tF$, but rather only
%the property that appears in Lemma \ref{L-CondF}. So, given an integer $m \geq 1$, we will say that a field $K$ is of type $\Fpm$ if
%if $n \geq 1$ is an integer, we will
%So, for ease of notation, we will say that $K$ is a field {\it of type $\Fpn$} if
%\vskip2mm

%$\mathrm{(F'_m)}$ \ \ \ \parbox[t]{15.5cm}{{\it For every finite field extension $L/K$, the quotient $L^{\times}/{L^{\times}}^m$ is finite.}}

%\vskip3mm

\noindent {\bf Remark 2.3.} It follows from the definition that if $K$ is of type $\Fpm$, then $K$ is automatically of type $\mathrm{(F'_{\ell})}$ for every $\ell \vert m.$

%We now turn to some concrete examples.

\newpage

\noindent {\bf Example 2.4.}
%The following fields are of type $\Fp.$
%\vskip1mm

\noindent (a) \parbox[t]{16cm}{In each of the following cases: $K$ is an algebraically closed field, the field $\R$ of real numbers, a finite field $\mathbb{F}_q$, or a $p$-adic field (i.e. a finite extension $K$ of $\Q_p$), the absolute Galois group $G_K$ is topologically finitely generated, hence $K$ is of type $\tF.$
In fact, in the last case, if $[K:\Q_p] = n$, then $G_K$ can be topologically generated by $n +2$ elements (see \cite[Theorem 3.1]{J1}).}

\vskip1mm

\noindent (b) \parbox[t]{16cm}{If $k$ is a field of characteristic 0 from the list given in (a), then,
using Lemma \ref{L-Topfg} and arguing by induction, we see that
the field $K = k ((t_1)) ((t_2)) \cdots ((t_r))$ of iterated Laurent series over $k$ is of type $\tF.$}

\vskip1mm

\noindent (c) \parbox[t]{16cm}{The field $\mathbb{F}_p ((t))$ is of type $\Fpm$ for all $m$ prime to $p.$ On the other hand, by Artin-Schreier theory, $\mathbb{F}_p ((t))$ has infinitely many cyclic Galois extensions of degree $p$, so is not of type $\tF.$}

\vskip5mm

\noindent {\bf Remark 2.5.} It is well-known that a $p$-adic field $k$ has cohomological dimension 2 (see, e.g., \cite[Ch.~II, \S 4.3, Proposition 12]{SerreGC}); arguing by induction, one then shows that the cohomological dimension of $K = k ((t_1))((t_2)) \cdots ((t_r))$ is $r+2$ (\cite[Theorem 2.5]{Raskind}).

\addtocounter{thm}{3}

\vskip5mm

The main motivation for introducing condition $\Fpm$ in the present context is the following finiteness result for Galois cohomology, which generalizes \cite[Ch. II, \S 5, Proposition 14]{SerreGC}.

\begin{prop}\label{P-FieldF'}
Let $K$ be a field and $m \geq 1$ an integer prime to $\mathrm{char}~K.$ Assume that $K$ is of type $\Fpm.$ Then for any finite $G_K$-module $A$ such that $mA = 0$, the groups $H^i (K, A)$ are finite for all $i \geq 0$.
\end{prop}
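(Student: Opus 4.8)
The plan is to reduce to the case that $A = \Z/m\Z(1)$ (after a finite base change) by dévissage, and then to proceed by induction on $i$. First, observe that it suffices to prove the statement after replacing $K$ by a finite separable extension $L$: indeed, if $H^i(L, A)$ is finite, then the restriction map $H^i(K,A) \to H^i(L,A)$ has kernel a subquotient of $H^i(\Ga(L/K), A^{G_L})$ (via the Hochschild–Serre spectral sequence, or the standard inflation argument together with a corestriction–restriction bound), which is finite because $\Ga(L/K)$ is finite and $A$ is finite; hence $H^i(K,A)$ is finite as well. Using this freedom, we may assume $K$ contains a primitive $m$th root of unity and that $G_K$ acts trivially on $\mu_m$, so all Tate twists $\mu_m^{\otimes j}$ become isomorphic to the constant module $\Z/m\Z$.

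Next I would perform a dévissage on the finite module $A$. Since $mA = 0$, the module $A$ has a finite filtration whose successive quotients are simple $\F_\ell$-modules with $\ell \mid m$; because cohomology is additive on short exact sequences (the long exact sequence sandwiches $H^i$ of the middle term between $H^i$ of the two ends), it suffices to treat a simple finite $G_K$-module $A$ killed by a prime $\ell \mid m$. After a further finite extension $L/K$ — still of type $\Fpm$, hence of type $\mathrm{(F'_\ell)}$ by Remark 2.3 — we may assume $G_L$ acts trivially on $A$, so $A \cong (\Z/\ell\Z)^r$ as a trivial module; and since $\ell \mid m$ and $K$ is of type $\mathrm{(F'_\ell)}$, we have $K^\times/{K^\times}^\ell$ finite, i.e. $H^1(K, \Z/\ell\Z(1)) = H^1(K,\Z/\ell\Z)$ is finite (using the chosen root of unity). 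Thus the whole problem reduces to: \emph{for $K$ of type $\mathrm{(F'_\ell)}$ containing $\mu_\ell$, the groups $H^i(K, \Z/\ell\Z)$ are finite for all $i \geq 0$.}

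Now I would run the induction on $i$. The cases $i = 0$ (where $H^0 = \Z/\ell\Z$) and $i = 1$ (finiteness of $H^1(K,\Z/\ell\Z) = \Hom_{\mathrm{cont}}(G_K, \Z/\ell\Z)$, equivalently of $K^\times/{K^\times}^\ell$) are exactly what we have arranged. For the inductive step, the key mechanism — following Serre's proof of \cite[Ch. II, \S 5, Proposition 14]{SerreGC} — is that finiteness of $H^1(K, \Z/\ell\Z)$ forces $K$ to have only finitely many cyclic (indeed abelian) extensions of exponent $\ell$; the compositum $K_\ell$ of all degree-$\ell$ extensions is therefore a finite extension of $K$, so $G_{K_\ell}$ has no open subgroup of index $\ell$, meaning $H^1(K_\ell, \Z/\ell\Z) = 0$. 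One then uses the Hochschild–Serre spectral sequence for $K_\ell/K$: the cohomology $H^p(\Ga(K_\ell/K), H^q(K_\ell, \Z/\ell\Z))$ with $q < i$ is finite by induction (finite group, finite coefficients), and for $q = i$ one needs $H^i(K_\ell,\Z/\ell\Z)$ finite — so the argument must be set up to induct simultaneously over all fields of type $\mathrm{(F'_\ell)}$, exploiting that $K_\ell$ is again such a field. Concretely, I would prove by induction on $i$ the statement ``$H^i(K,\Z/\ell\Z)$ is finite for every field $K$ of type $\mathrm{(F'_\ell)}$'': given this for $i-1$, for a fixed $K$ pass to $K_\ell$, where $H^1$ vanishes; an argument with cup products / the Bockstein (or a direct spectral sequence chase using $H^1(K_\ell, \cdot) = 0$ to kill the relevant differentials) shows $H^i(K_\ell, \Z/\ell\Z)$ is built from $H^{i-1}$ and $H^1$ data, forcing its finiteness, and then Hochschild–Serre for the finite extension $K_\ell/K$ gives finiteness of $H^i(K, \Z/\ell\Z)$.

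The main obstacle is the inductive step in the last paragraph: controlling $H^i(K_\ell, \Z/\ell\Z)$ once $H^1$ vanishes is not automatic, since vanishing of $H^1$ does not immediately bound higher cohomology for a general profinite group. The resolution is precisely Serre's device — iterating the construction $K \rightsquigarrow K_\ell \rightsquigarrow (K_\ell)_\ell \rightsquigarrow \cdots$ stays within type $\mathrm{(F'_\ell)}$, and one combines the induction on $i$ with a descent through this tower, using that each step is a finite extension so that Hochschild–Serre transfers finiteness downward. Carefully bookkeeping that each field in the tower remains of type $\mathrm{(F'_\ell)}$ (which follows since finite extensions of such fields are again of that type, as $\mathrm{(F'_\ell)}$ passes to finite separable extensions), and that the spectral sequence contributions in total degree $i$ involving $q < i$ are finite by the inductive hypothesis while the $q = i$ term is handled at the top of the tower, completes the argument.
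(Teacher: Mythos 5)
Your reductions at the start (passing to a finite separable extension via Hochschild--Serre, adjoining $\mu_m$, d\'evissage to modules $\Z/\ell\Z$ with $\ell \mid m$ and trivial action) are fine and parallel the paper's. The gap is in the inductive step, and it is fatal as written. First, the assertion that the compositum $K_\ell$ of all degree-$\ell$ extensions of $K$ satisfies $H^1(K_\ell,\Z/\ell\Z)=0$ is false: finiteness of $K^\times/{K^\times}^\ell$ makes $K_\ell/K$ finite, but $K_\ell$ in general has many new degree-$\ell$ extensions of its own (take $K=\Q_p$, $\ell=p$: the compositum of all degree-$p$ extensions is a finite extension of $\Q_p$ and certainly admits further index-$p$ subgroups of its absolute Galois group). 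To actually kill $H^1$ one must pass to the maximal pro-$\ell$ extension, an infinite tower, and then Hochschild--Serre no longer transfers finiteness back down; iterating $K \rightsquigarrow K_\ell$ does not repair this.

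Second, and more fundamentally, the step you flag as ``an argument with cup products / the Bockstein shows $H^i$ is built from $H^{i-1}$ and $H^1$ data'' is not a formal spectral-sequence fact; the statement that degree-$i$ cohomology with $\mu_\ell^{\otimes i}$-coefficients is generated by products of degree-one classes is precisely the Bloch--Kato conjecture (norm residue isomorphism theorem), and it is the actual engine of the paper's proof: after adjoining $\mu_m$, the paper observes that $K^M_i(K)/m$ is generated by symbols, of which there are finitely many because $K^\times/{K^\times}^m$ is finite, and then invokes $K^M_i(K)/m \simeq H^i(K,\Z/m\Z(i)) \simeq H^i(K,\Z/m\Z(1))$; the general module is then handled exactly by your d\'evissage plus Hochschild--Serre. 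Note also that Serre's proof of the model statement \cite[Ch.\ II, \S 5, Proposition 14]{SerreGC} uses that $p$-adic fields have cohomological dimension $2$ (plus duality), which is unavailable here --- fields of type $\Fpm$ such as $\Q_p((t_1))\cdots((t_r))$ have cohomological dimension $r+2$ --- so no variant of that argument can control $H^i$ for $i\geq 2$ without the norm residue theorem (or at least Merkurjev--Suslin in degree $2$). Your proposal never invokes this input, and without it the inductive step cannot be completed.
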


One of the principal ingredients in the proof is the Bloch-Kato Conjecture, established in the work of Rost \cite{Rost}, Voevodsky (\cite{V1}, \cite{V2}), Weibel \cite{Weib}, and others. We recall that for a field $k$ and integer $n > 1$, the {\it $n$th Milnor $K$-group} $K_n^M(k)$ is defined as the quotient of the $n$-fold tensor product $k^{\times} \otimes_{\Z} \cdots \otimes_{\Z} k^{\times}$ by the subgroup generated by elements $a_1 \otimes \cdots \otimes a_n$ such that $a_i + a_j = 1$ for some $1 \leq i < j \leq n.$ For $a_1, \dots, a_n \in k^{\times}$, the image of $a_1 \otimes \cdots \otimes a_n$ in $K^M_n(k)$, denoted $\{ a_1, \dots, a_n \}$, is called a {\it symbol}; thus, $K^M_n (k)$ is generated by symbols.
By convention, one sets $K^M_0 (k) = \Z$ and $K^M_1 (k) = k^{\times}.$ Furthermore, for any integer $m$ prime to $\text{char}~k$, the Galois symbol yields a map
$$
h^n_{k,m} \colon K_n^M (k) \to H^n (k, \Z / m \Z (n)).
$$
(See \cite[\S 4.6]{GiSz} for the relevant definitions.) The main result is
\begin{thm}\label{T-BlochKato}
For any field $k$ and any integer $m$ prime to $\text{char}~k$, the Galois symbol induces an isomorphism
$$
K_n^M(k)/m \stackrel{\sim}{\longrightarrow} H^n (k, \Z / m \Z (n))
$$
for all $n \geq 0$.
\end{thm}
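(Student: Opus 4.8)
For a result of this depth the only honest plan is not to reprove it: Theorem~\ref{T-BlochKato} is the \emph{norm residue theorem}, i.e.\ the Bloch--Kato conjecture, and in practice one simply invokes the work of Rost, Voevodsky, and Weibel (\cite{Rost}, \cite{V1}, \cite{V2}, \cite{Weib}). For completeness I would at least recall the architecture of that proof. First one reduces to the case $m = \ell$ prime: using the short exact sequences $0 \to \Z/\ell\Z(n) \to \Z/\ell^{a}\Z(n) \to \Z/\ell^{a-1}\Z(n) \to 0$, the associated Bocksteins, and the five lemma, a standard d\'evissage on the exponent and on the primes dividing $m$ shows the prime case implies the general one (this patching is carried out in \cite{Weib}). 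Fixing $\ell$, one argues by induction on $n$: the cases $n = 0$ (both sides are $\Z/\ell\Z$) and $n = 1$ (the map is the Kummer isomorphism $k^\times/{k^\times}^\ell \stackrel{\sim}{\to} H^1(k, \mu_\ell)$) are elementary, while $n = 2$ is the Merkurjev--Suslin theorem, proved via the $K$-theory of Severi--Brauer varieties.

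For general $n$ the decisive step is to translate the statement into \emph{motivic cohomology}. By the Nesterenko--Suslin--Totaro theorem one has $K_n^M(k)/\ell \cong H^n_{\mathrm{mot}}(k, \Z/\ell(n))$, and since $H^n(k, \Z/\ell\Z(n)) = H^n_{\mathrm{\acute{e}t}}(k, \Z/\ell(n))$, the assertion becomes that the change-of-topology map $H^n_{\mathrm{mot}}(k, \Z/\ell(n)) \to H^n_{\mathrm{\acute{e}t}}(k, \Z/\ell(n))$ is an isomorphism. This is a special case of the Beilinson--Lichtenbaum conjecture $\mathrm{BL}(n)$ --- that for smooth $X$ over a field the comparison map $H^p_{\mathrm{mot}}(X, \Z/\ell(q)) \to H^p_{\mathrm{\acute{e}t}}(X, \Z/\ell(q))$ is an isomorphism for $p \le q$ and a monomorphism for $p = q + 1$ --- and conversely (Suslin--Voevodsky, Geisser--Levine) $\mathrm{BL}(n)$ follows from the norm residue isomorphism in weights $\le n$, so the two are equivalent. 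Thus it suffices to prove $\mathrm{BL}(n)$, equivalently the bijectivity of the Galois symbol $h^n_{k, \ell}$, by induction on $n$.

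The heart of the argument is then geometric. To each nonzero symbol $\{a_1, \dots, a_n\}$ one attaches, following Rost, a \emph{norm variety} of dimension $\ell^{n-1} - 1$ that splits the symbol and whose motive contains a ``Rost motive'' with prescribed cohomology; this rests on algebraic cobordism, the degree formula, and symmetric operations. Combining these varieties with Voevodsky's motivic Steenrod algebra --- in particular the Milnor operations $Q_i$ and the vanishing of the motivic Margolis homology of the Rost motive --- one runs a delicate induction showing that the kernel and cokernel of $h^n$ are killed after base change to the function field of a norm variety, hence vanish. I expect the genuine obstacles to be exactly the two deep inputs: the construction and properties of norm/Rost varieties, and the machinery of motivic cohomology operations together with the motivic Postnikov tower. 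Everything outside of these is formal: the d\'evissage in $m$, the reduction to $\mathrm{BL}(n)$, and the induction on $n$.
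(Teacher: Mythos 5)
Your approach matches the paper's: Theorem~\ref{T-BlochKato} is the norm residue (Bloch--Kato) theorem, which the paper does not prove but simply quotes from the work of Rost, Voevodsky, Weibel and others, exactly as you propose. Your supplementary sketch of the proof architecture (d\'evissage to $m=\ell$ prime, reduction to Beilinson--Lichtenbaum, norm/Rost varieties and motivic operations) is accurate but goes beyond what the paper itself supplies.
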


\vskip2mm

Let us now return to

\vskip2mm

\noindent {\it Proof of Proposition \ref{P-FieldF'}.} We first show that the groups $H^i (K', \Z / m \Z (1))$ are finite for any finite separable field extension $K'/K$. By Remark 2.3,
%Observe that since condition $\Fpm$ implies condition $\mathrm{(F'_{\ell})}$ for any $\ell$ dividing $m$,
this will also establish the finiteness of $H^i (K', \Z / \ell \Z (1))$ for every $\ell \vert m.$
%and any integer $m \geq 1$ relatively prime to $\text{char}~K.$
For ease of notation, we will work with $K' = K$, but the argument is identical for any finite extension.

We may assume without loss of generality that $K$ contains a primitive $m$-th root of unity $\zeta_m$. Indeed, if $L = K(\zeta_m)$ and the groups $H^i (L, \Z / m \Z (1))$ are finite, then the finiteness of $H^i (K, \Z / m \Z (1))$ follows from the Hochschild-Serre spectral sequence
$$
H^i (\Ga (L/K), H^j (L, \Z / m \Z (1))) \Rightarrow H^{i+j} (K, \Z / m \Z (1)).
$$
Thus, we may, and do, assume that $\Z / m \Z (1)$ is isomorphic as a $G_K$-module to $\Z / m \Z$ with trivial action.
%By our assumption, $H^1 (L, \Z / m \Z (1))$ is finite for any finite extension $L/K.$ Take a finite Galois extension $L$ of $K$ containing a primitive $m$-th root of unity, so that $\Z / m \Z (1)$ becomes isomorphic as a $G_L$-module to $\Z / m \Z$ with trivial action.
Then Theorem \ref{T-BlochKato} yields the finiteness of the groups $H^i (K, \Z / m \Z (1))$ for all $i.$ More precisely, by our assumption, $K_1^M (K)/m = K^{\times}/ {K^{\times}}^m$ is finite. Hence, there are only finitely many symbols in $K_i^M(K)/m$, and since each symbol has order at most $m$, it follows that $K_i^M(K)/m$ finite.
Consequently, $H^i (K, \Z / m \Z (1))$ is finite in view of the isomorphisms
$$
K_i^M (K)/m \simeq H^i (K, \Z / m \Z (i)) \simeq H^i (K, \Z / m \Z (1)).
$$
%it follows that there are only finitely many $i$-symbols in $K_i^M(L)/m$; then, using the multilinearity of symbols, we see that each element of $K_i^M(L)/m$, can be written as a sum of bounded length of $i$-symbols, yielding the finiteness of $K_i^M (L)/m.$ Since $K_i^M (L)/m \simeq H^i (L, \Z / m \Z (i)) = H^i (L, \Z / m \Z (1))$, our claim follows. Finally, examining the Hochschild-Serre spectral sequence
%$$
%H^i (\Ga (L/K), H^j (L, \Z / m \Z (1))) \Rightarrow H^{i+j} (K, \Z / m \Z (1)),
%$$
%we obtain that all of the $H^i (K, \Z / m \Z (1))$ are finite.

Now let $A$ be an arbitrary finite $G_K$-module annihilated by $m.$ There exists a finite Galois extension $L/K$ such that $A$ becomes isomorphic as a $G_L$-module to a direct sum of modules of type $\Z / \ell \Z (1)$ with $\ell \vert m.$ By the above argument, all of the $H^i (L, A)$ are finite, and again we conclude from the Hochschild-Serre spectral sequence that all of the $H^i (K, A)$ are finite, as needed. $\Box$

\vskip2mm

This result now enables us to derive the following finiteness result for \'etale cohomology.
%We first fix some notations.
%For a scheme $X$ and a positive integer $n$ invertible on $X$, we will denote by $\mu_n$ the \'etale sheaf on $X$ defined by the group of $n$-th roots of unity. Furthermore, we use the standard notations for the Tate twists of $\mu_n$. Namely, for $i \geq 0$, we set
%$$
%\Z / n \Z (i) = \mu_n^{\otimes i},
%$$
%(where $\mu_n^{\otimes i}$ is the sheaf associated to the $i$-fold tensor product of $\mu_n$), with the convention that
%$$
%\mu_n^{\otimes 0} = \Z/n \Z.
%$$
%If $i < 0$, we let
%$$
%\Z / n \Z (i) = \Hom_{\Z} (\mu_n^{\otimes (- i)}, \Z / n \Z).
%$$

\begin{cor}\label{C-FinEt}
Let $K$ be a field, $m \geq 1$ an integer invertible in $K$, and assume that $K$ is of type $\Fpm.$
Then for any geometrically integral algebraic variety $X$ over $K$, the groups $\he^i(X, \Z / m \Z (j))$ are finite for all $i$ and $j$.
\end{cor}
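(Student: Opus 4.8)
The plan is to reduce the statement to the case of a smooth variety, then run an induction on $\dim X$ using a combination of the Leray spectral sequence for the generic point and localization in \'etale cohomology. First I would note that it suffices to treat $\Z/m\Z(j)$ for a single $j$, say $j=0$, since $\mu_m$ is locally constant and a finite \'etale cover trivializing it can be pushed down by a transfer/norm argument; alternatively one just carries the twist along throughout, as it plays no essential role. Next, by de Jong's alteration (or, in characteristic $0$, resolution of singularities) together with a d\'evissage over the closed subset where a proper morphism $X' \to X$ fails to be an isomorphism, one reduces to the case $X$ smooth, since \'etale cohomology with finite coefficients of a scheme sits in long exact sequences relating it to that of a dense open and of a lower-dimensional closed complement. (One should be slightly careful here: an alteration is only generically finite, so one really wants the Bloch--Ogus/Gabber localization package rather than a naive pushforward; see below.)

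The heart of the argument is the following induction on $d = \dim X$, for $X$ smooth and geometrically integral. For $d=0$, $X = \Spec K'$ with $K'/K$ finite separable, and $\he^i(X, \Z/m\Z(j)) = H^i(K', \Z/m\Z(j))$ is finite by Proposition \ref{P-FieldF'} (noting that $K'$ is again of type $\Fpm$, since any finite separable extension of $K'$ is one of $K$). For the inductive step, let $\eta = \Spec K(X)$ be the generic point and $j\colon \eta \hookrightarrow X$ the inclusion. The Bloch--Ogus spectral sequence (the coniveau spectral sequence), which I would invoke from the material of \S\ref{S-3}, has
$$
E_1^{p,q} = \bigoplus_{x \in X^{(p)}} H^{q-p}(\kappa(x), \Z/m\Z(j-p)) \Longrightarrow \he^{p+q}(X, \Z/m\Z(j)).
$$
Each residue field $\kappa(x)$ is a finitely generated field extension of $K$ of transcendence degree $d - p \leq d-1$ for $p \geq 1$, and $K(X)$ itself has transcendence degree $d \geq 1$; the key point is that a finitely generated extension of transcendence degree $\geq 1$ over a field of type $\Fpm$ is again of type $\Fpm$ (this is where I would use that a function field in one variable over an $\Fpm$-field is $\Fpm$, proved via the same $K$-theoretic mechanism as in Proposition \ref{P-FieldF'}: by Bloch--Kato it is enough to bound $K_1^M/m = L^\times/{L^\times}^m$, and for $L = K'(t)$ one has the residue exact sequence $1 \to K'^\times/{K'^\times}^m \to L^\times/{L^\times}^m \to \bigoplus_{v} \Z/m\Z \to 0$ where the sum is over closed points of $\mathbb{P}^1_{K'}$ — but there are infinitely many such points, so the middle term is \emph{not} finite).

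This last parenthetical is exactly where the real obstacle lies, and it forces a change of strategy: one cannot expect $\he^i$ of a positive-dimensional variety to be finite by a crude d\'evissage to residue fields, because those residue fields have infinite $H^1$. The correct approach, which I would carry out instead, is the one already implicit in the references to \cite{CT-SB}: use that the \emph{proper} case is governed by the Hochschild--Serre spectral sequence
$$
H^p(K, \he^q(\bar X, \Z/m\Z(j))) \Longrightarrow \he^{p+q}(X, \Z/m\Z(j)),
$$
where $\bar X = X \times_K \bar K$. Here $\he^q(\bar X, \Z/m\Z(j))$ is a \emph{finite} $G_K$-module annihilated by $m$ (finiteness of \'etale cohomology of varieties over a separably closed field with finite coefficients is a standard theorem, SGA 4), so each $E_2^{p,q} = H^p(K, \he^q(\bar X, \Z/m\Z(j)))$ is finite by Proposition \ref{P-FieldF'}, and since $\he^q(\bar X, -)$ vanishes for $q$ outside a bounded range (and $\mathrm{cd}$ considerations or the spectral sequence's convergence), the abutment is finite. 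For $X$ smooth but not proper, I would embed $X$ as a dense open in a smooth proper (or just proper, via alterations as above) $\bar X$ over $K$ and use the Gysin/localization long exact sequence relating $\he^i(X,-)$, $\he^i(\bar X, -)$, and the cohomology of the lower-dimensional boundary $Z = \bar X \setminus X$ with a Tate-twisted coefficient sheaf; finiteness for $\bar X$ comes from the proper case just handled, finiteness for $Z$ comes from the inductive hypothesis on dimension (after another d\'evissage to make $Z$ smooth), and the five lemma finishes the argument. Thus the main technical obstacle — that naive d\'evissage to points fails — is circumvented by working over the geometric variety $\bar X$, where finiteness is automatic, and by feeding the finiteness over $K$ through Proposition \ref{P-FieldF'} one layer at a time.
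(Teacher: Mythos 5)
Your final argument is, at its core, exactly the paper's proof: base change to $\bar{X} = X \times_K \bar{K}$, quote the finiteness of $\he^q(\bar{X}, \Z/m\Z(j))$ from SGA~4, and feed these finite $m$-torsion $G_K$-modules through Proposition \ref{P-FieldF'} via the Hochschild--Serre spectral sequence $H^p(K, \he^q(\bar{X}, \Z/m\Z(j))) \Rightarrow \he^{p+q}(X, \Z/m\Z(j))$ (convergence is harmless: for fixed total degree $i$ only the finitely many terms with $p+q=i$, $q \leq 2\dim X$, contribute). Where you diverge is in believing that this mechanism only covers the \emph{proper} case and then bolting on a reduction via compactification, alterations, and Gysin/localization sequences. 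That detour is unnecessary: the finiteness theorem you invoke (SGA~4, Exp.~XVI, Th.~5.2) holds for arbitrary finite-type schemes over a separably closed field with constructible coefficients of torsion prime to the characteristic --- properness is only needed to handle $p$-torsion in characteristic $p$, which is excluded here since $m$ is invertible in $K$. So the paper simply applies the spectral-sequence argument to $X$ itself, with no smoothness or properness hypothesis.

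It is worth stressing this because the detour is also the shakiest part of your write-up. An alteration $X' \to X$ is only proper and generically finite, so it does not embed $X$ as a dense open of anything; Nagata gives a compactification but not a smooth one, and in positive characteristic you cannot resolve; relating $\he^i(X,-)$, $\he^i$ of the compactification, and the cohomology of the boundary $Z$ via a Gysin sequence requires purity, hence regularity of both the ambient scheme and (after stratification) of the pieces of $Z$, plus care with the degree of the alteration not being prime to $m$ in any transfer argument. None of this can be waved through with a reference to "the Bloch--Ogus/Gabber localization package," and none of it is needed. (Your opening gambit --- d\'evissage to residue fields via the coniveau spectral sequence --- indeed fails for the reason you identify: residue fields of positive transcendence degree over $K$ are not of type $\Fpm$; it was right to abandon it.)
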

\begin{proof}
Let $\bar{X} = X \times_K \bar{K}.$ It is well-known that $\he^i (\bar{X}, \Z / m \Z (j))$ are finite $m$-torsion groups for all $i$ and $j$ (see \cite[Expos\'e XVI, Th\'eor\`eme 5.2]{SGA4}). Consequently, the groups $H^p (K, \he^q (\bar{X}, \Z / m \Z (j))$ are finite by Proposition \ref{P-FieldF'}. Our claim now follows from the
Hochschild-Serre spectral sequence
$$
H^p (K, H^q_{\text{\'et}} (\bar{X}, \Z / m \Z (j))) \Rightarrow H^{p+q}_{\text{\'et}} (X, \Z / m \Z (j)).
$$
\end{proof}

\section{Kato complexes, Bloch-Ogus theory, and unramified cohomology}\label{S-3}

Our goal in this section is to recall some key elements of the theory of Kato complexes and Bloch-Ogus theory that will be needed for our analysis of unramified cohomology.
%In particular, the latter enables one to relate the \'etale cohomology of algebraic varieties to the unramified cohomology of their function fields.
Throughout this section, we let $F$ be an arbitrary field and fix a positive integer $m$ that is invertible in $F$.
%Our primary focus will be on the unramified cohomology of the function field $F(X)$ of a smooth irreducible algebraic variety $X$ over $F$, though some elements can of course be carried out in a more general setting.

\vskip5mm

\subsection{Kato complexes}(Cf. \cite{Kato}). Let $X$ be an excellent noetherian scheme and $m$ a positive integer invertible on $X$ (note that Kato also deals with the case when $m$ is not necessarily invertible on $X$, but we will not need this). For any integers $i, j$, Kato constructed a homological complex $C_m^{i,j}(X)$
$$
\cdots \to \bigoplus_{x \in X_p} H^{p+ i} (\kappa(x), \Z / m \Z (p + j)) \to \bigoplus_{x \in X_{p-1}} H^{p+i-1} (\kappa(x), \Z/ m \Z (p+j-1)) \to \cdots \to \bigoplus_{x \in X_0} H^{i} (\kappa(x), \Z / m \Z (j))
$$
where the term $\oplus_{x \in X_p}$ is placed in degree $p$.
The differentials
$$
\partial_p \colon \bigoplus_{x \in X_{p}} H^{r+i}(\kappa(x), \Z / m \Z (p + j)) \to \bigoplus_{x \in X_{p-1}} H^{p+i-1}(\kappa(x), \Z / m \Z (p + j-1)).
$$
%Recall that given a field $K$ with discrete valuation $v$ and residue field $k$, for any integer $n$ invertible in $n$, there exists a residue map
%$$
%\partial_v \colon H^i (K, \Z / n \Z (i)) \to H^{i-1}(k, \Z / n \Z (i-1)).
%$$
are defined as follows. Let $x \in X_p$ and set $Z_x = \overline{ \{ x \}}.$ Then each point $y$ of codimension 1 on $Z_x$ corresponds to a point in $X_{p-1}.$ Let $y_1, \dots, y_s$ be the points on the normalization $\tilde{Z}_x$ lying above $y.$ The local ring at each $y_k$ is a discrete valuation ring, yielding a discrete valuation on the function field $\kappa(x)$ of $\tilde{Z}_x.$ Let
$$
\partial^x_{y_k} \colon H^{p+i} (\kappa(x), \Z / m \Z (p+j)) \to H^{p+i-1} (k(y_k), \Z / m \Z (p+j-1))
$$
be the corresponding residue map.
One then defines
$$
\partial^x_y = \sum_{k=1}^s {\rm Cor}_{\kappa(y_k)/ \kappa (y)} \circ \partial^x_{y_k},
$$
where ${\rm Cor}$ is the corestriction map. The differential $\partial_r$ is the direct sum of all such maps. The fact that this construction does indeed produce
a complex is proved in \cite[Proposition 1.7]{Kato}. We will write $H_q (C_m^{i,j}(X))$ for the degree $q$ homology of $C_m^{i,j}(X).$

\vskip5mm

\noindent {\bf Remark 3.1.} If in the above construction, $X$ is a $d$-dimensional smooth irreducible algebraic variety over $F$ with function field $F(X)$, then
from the description of the differentials, we see that the $d$th homology of $C^{i,j}_m(X)$
$$
H_d (C^{i,j}_m(X))= \ker \left( H^{d+i} (F(X), \Z / m \Z (d+j)) \to \bigoplus_{x \in X_{d-1}} H^{d+i-1} (\kappa(x), \Z / m \Z (d+j-1)) \right)
$$
is precisely $H_{\text{ur}}^{d+i} (F(X), \Z / m \Z (d+j)).$

\vskip5mm

\noindent {\bf Remark 3.2.} We mention some important results regarding Kato complexes of varieties over finite and local fields.

\vskip2mm

\noindent (a) \parbox[t]{16cm}{Let $X$ be a proper smooth irreducible algebraic variety over a finite field $L.$ Then $H_q(C_m^{1,0}(X)) = 0$ for all $q \neq 0$ and any $m$ invertible in $L$. Indeed, for $X$ of dimension 1, this is simply the function field version of the Brauer-Hasse-Noether theorem. The vanishing of $H_2(C_m^{1,0}(X))$ for 2-dimensional varieties was established by Colliot-Th\'el\`ene, Soul\'e, and Sansuc (see \cite[\S 2.4, Rem. 2]{CTSS}). The general case was conjectured by Kato (see \cite[Conjecture 0.3]{Kato}) and proved by Kerz and Saito \cite{KS}, building on earlier work of Jannsen and Saito \cite{JS}.}

\vskip2mm

\noindent (b) \parbox[t]{16cm}{Let $K$ be a local field with ring of integers $\mathcal{O}_K$ and (finite) residue field $k.$ Suppose $X$ is a regular proper flat scheme over $\mathcal{O}_K$. Denote by $X_{\eta}$ and $X_s$ the generic and special fibers of $X$, respectively, and assume that $X_{\eta}$ is smooth. Since $X_p \cap X_{\eta} = (X_{\eta})_{p-1}$, it follows that the differential in the Kato complex of $X$ induces a morphism of complexes
\begin{equation}\label{E-KatoComplexResidue}
\partial_X \colon C_m^{2,1}(X_{\eta}) \to C_m^{1,0}(X_s)^{(-)},
\end{equation}
where the superscript $(-)$ indicates that the differentials of the complex have been multiplied by -1. Kato conjectured that $(\ref{E-KatoComplexResidue})$ is a quasi-isomorphism of complexes and verified this to be the case when $\dim X = 2$ (\cite[Proposition 5.2]{Kato}). The general case was handled by Kerz and Saito (see \cite[proof of Theorem 8.4]{KS}).
%showed that if $\dim X = 2$, then $(\ref{E-KatoComplexResidue})$ is a quasi-isomorphism of complexes (see \cite[Proposition 5.2]{Kato}).
Combined with (a), this implies that if $S$ is a smooth proper irreducible algebraic variety over $K$ having good reduction, then $H_q (C_m^{2,1}(S)) = 0$ for $q \neq 0$ and any positive integer $m$ invertible in $k$.}

%$H^3_{\rm ur}(K(C), \Z/ m \Z (2)) = 0$ for any positive integer $m$ prime invertible in $k$.}

%\vskip3mm

%We should note that Kato conjectured, more generally, that $(\ref{E-KatoComplexResidue})$ is {\it always} a quasi-isomorphism of complexes. Combined with the results of Kerz and Saito mentioned above, this leads to

%\vskip3mm

%\noindent {\bf Conjecture 3.3.} (Kato) {\it Let $S$ be a smooth proper irreducible algebraic variety over a local field $K$ having good reduction. Then $H_q (C_m^{2,1}(S)) = 0$ for $q \neq 0$ and any positive integer $m$ invertible in the residue field $k$.}

\addtocounter{thm}{2}

\vskip5mm

\subsection{Bloch-Ogus theory} We now discuss a more geometric approach, due to Bloch and Ogus \cite{BO}, to studying unramified cohomology.
In algebraic topology, it is well-known that the homology of a CW-complex $Z$ can be analyzed by considering the filtration by $i$-skeleta $Z_i$:
$$
\emptyset = Z_{-1} \subset Z_0 \subset Z_1 \subset \cdots \subset Z_k = Z.
$$
More precisely, such a filtration induces a natural filtration on the singular chains of $Z$, which, using the theory of exact couples, leads to a spectral sequence converging to the homology of $Z$.

As shown in \cite{BO}, a similar strategy can be implemented for algebraic varieties using \'etale homology. Let $S = {\rm Spec}(F)$, for a field $F$, and consider an algebraic variety $f \colon X \to S$ over $F$. Recall that for an integer $m$ invertible in $F$, one sets
$$
\hhe_a (X/F, \Z / m \Z (b)) = \he^{-a}(X, Rf^! \Z / m \Z (-b)),
$$
where $Rf^!$ is the exceptional inverse image functor defined in \cite[Expos\'e XVIII]{SGA4}. A key observations is that, just as in the topological setting,
there is a spectral sequence of homological type (the so-called {\it niveau spectral sequence})
\begin{equation}\label{E-NiveauSS}
E^1_{p,q} (X/F, \Z/ m \Z (b)) = \bigoplus_{x \in X_p} \hhe_{p+q}(x/F, \Z / m \Z (b)) \Rightarrow \hhe_{p+q}(X/F, \Z / m \Z (b)),
\end{equation}
where
$$
\hhe_a (x/F, \Z / m \Z (b)) = \lim_{\rightarrow} \hhe_a (V/F, \Z / m \Z (b))
$$
and the limit is taken over all open non-empty subschemes $V \subset \overline{ \{x  \}}$ (see \cite[Proposition 3.7]{BO}).
Furthermore, if $X$ is a {\it smooth} algebraic variety, then one can pass from \'etale homology to \'etale cohomology using Poincar\'e duality for smooth algebraic varieties (see \cite[Expos\'e XVIII, 3.2.5]{SGA4}). Indeed, given a smooth irreducible variety $X$ of dimension $d$, there are canonical isomorphisms
\begin{equation}\label{E-PDuality}
\hhe_a(X/F, \Z / m \Z (b)) \simeq \he^{2d -a}(X, \Z / m \Z (d-b)).
\end{equation}
In this case, it is convenient to take $b' = d-b$, $q' =d-q$, and $r' = d-r$ and renumber (\ref{E-NiveauSS}) into a cohomological first quadrant {\it coniveau spectral sequence}
\begin{equation}\label{E-ConiveauSS}
E_1^{p,q}(X/F, \Z / m \Z (b)) = \bigoplus_{x \in X^{(p)}} H^{q-p}(\kappa(x), \Z / m \Z (b - p)) \Rightarrow \he^{p+q}(X, \Z / m \Z (b)),
\end{equation}
where the groups on the left are the Galois cohomology groups of the residue fields $\kappa(x)$ (for a slightly different derivation of this spectral sequence that avoids the use of \'etale homology, one can consult \cite{CTHK}).

\vskip5mm

\noindent {\bf Remark 3.3.}
It is well-known that the Bloch-Ogus and Kato constructions are compatible (see, e.g., \cite[Remark 2.5.5]{JSS}). More precisely, if $X$ is a smooth irreducible algebraic variety over a field $F$, then the Bloch-Ogus complex $$E_1^{\bullet, q}(X/F, \Z / m \Z (b))$$ coincides with the sign-modified Kato complex $$C_m^{q-d, b-d}(X)^{(-)}.$$

\vskip5mm

\addtocounter{thm}{1}

The fundamental result of Bloch and Ogus was the calculation of the $E_2$-term of (\ref{E-ConiveauSS}). We will need the following notation: let
%Returning now to the Bloch-Ogus complex, note that for each $x \in X$, the cohomology group $A = H^{q-p} (\kappa(x), \Z / m \Z(j - p))$ may be viewed as a constant sheaf on $\overline{\{ x \}}$. One can then consider its extension by zero $i_x A$ to all of $X$. We also recall that
$\mathcal{H}^q (\Z / m \Z (j))$
denote the Zariski sheaf on $X$ associated to the presheaf that assigns to an open $U \subset X$ the cohomology group $H^i_{\text{\'et}}(U, \Z / m \Z (j)).$
Bloch and Ogus showed that
$$
E_2^{p,q}(X/F, \Z/ m \Z (b)) = H^p (X, \mathcal{H}^q (\Z / m \Z (b)))
$$
(see \cite[Corollary 6.3]{BO}). The resulting (first quadrant) spectral sequence
\begin{equation}\label{E-BO}
E_2^{p,q}(X/F, \Z / m \Z (b)) = E_2^{p,q}= H^p (X, \mathcal{H}^q (\Z / m \Z (b))) \Rightarrow H^{p+q}_{\text{\'et}} (X, \Z/ m \Z (b))
\end{equation}
is usually referred to as the {\it Bloch-Ogus spectral sequence}.

\vskip2mm

For ease of reference, we summarize the main points of the above discussion.

\begin{prop}\label{P-BlochOgusSS}
The Bloch-Ogus spectral sequence $(\ref{E-BO})$ associated to a smooth irreducible algebraic variety $X$ over a field $F$ has the following properties:
\vskip1mm

\noindent {\rm (a)} $E_2^{p,q} = 0$ for $p > \dim X$ and all $q$;

\vskip1mm

\noindent {\rm (b)} $E_2^{p,q} = 0$ for $p > q$; and

\vskip1mm

\noindent {\rm (c)} \parbox[t]{16cm}{$E_2^{0, q} = H^0 (X, \mathcal{H}^q (\Z / m \Z (b)))$ coincides with the unramified cohomology $H^q_{\mathrm{ur}} (F(X), \Z / m \Z (b))$.}

\end{prop}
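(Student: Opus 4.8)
The plan is to treat the three assertions separately, as they draw on rather different inputs.

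For (a) I would simply invoke Grothendieck's vanishing theorem. The object $\mathcal{H}^q(\Z/m\Z(b))$ is an abelian sheaf on the noetherian topological space underlying $X$, whose Krull dimension is $d := \dim X$; hence $H^p(X, \mathcal{F}) = 0$ for every $p > d$ and every abelian sheaf $\mathcal{F}$ on $X$, and in particular for $\mathcal{F} = \mathcal{H}^q(\Z/m\Z(b))$. This gives (a) with nothing further to check.

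For (b) I would read the vanishing directly off the $E_1$-page of the coniveau spectral sequence $(\ref{E-ConiveauSS})$. By construction, $E_1^{p,q}$ is a direct sum of the Galois cohomology groups $H^{q-p}(\kappa(x), \Z/m\Z(b-p))$ indexed by the points $x \in X^{(p)}$. When $p > q$ the cohomological index $q - p$ is strictly negative, so each of these groups vanishes; thus $E_1^{p,q} = 0$, and since $E_2^{p,q}$ is a subquotient of $E_1^{p,q}$ it vanishes as well.

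For (c) the key input is the Bloch-Ogus computation of the $E_2$-term, namely $E_2^{0,q} = H^0(X, \mathcal{H}^q(\Z/m\Z(b)))$, which is the group of global sections of $\mathcal{H}^q(\Z/m\Z(b))$. On the other hand, reading $E_2^{0,q}$ off the $E_1$-page, it is the kernel of the first differential $d_1^{0,q} \colon E_1^{0,q} \to E_1^{1,q}$, since no differential maps into the leftmost column. As $X$ is irreducible, its unique codimension-$0$ point is the generic point $\eta$ with $\kappa(\eta) = F(X)$, so $E_1^{0,q} = H^q(F(X), \Z/m\Z(b))$, while $E_1^{1,q} = \bigoplus_{x \in X^{(1)}} H^{q-1}(\kappa(x), \Z/m\Z(b-1))$. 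By the compatibility of the Bloch-Ogus and Kato constructions recorded in Remark 3.3 (together with Remark 3.1), the differential $d_1^{0,q}$ agrees, up to sign, with the direct sum $\bigoplus_{x \in X^{(1)}} \partial_{v_x}^q$ of the residue maps attached to the geometric places $v_x \in V_0$. Hence
$$
E_2^{0,q} = \ker d_1^{0,q} = \bigcap_{v \in V_0} \ker \partial_v^q = H^q_{\mathrm{ur}}(F(X), \Z/m\Z(b)),
$$
which is (c).

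The one genuinely non-formal ingredient, and the step I expect to be the main obstacle, is the Bloch-Ogus identification $E_2^{p,q} \cong H^p(X, \mathcal{H}^q(\Z/m\Z(b)))$ — equivalently, the fact that the complex $E_1^{\bullet, q}$ is an acyclic (Gersten-type) resolution of the sheaf $\mathcal{H}^q(\Z/m\Z(b))$ in the Zariski topology. This is exactly the main theorem of \cite{BO} (see also \cite{CTHK}), and I would invoke it rather than reprove it. Granting it, part (c) is essentially bookkeeping with the generic point and with the definition of the residue differential (for which Remark 3.3 and \cite{Kato} suffice), and parts (a) and (b) are immediate.
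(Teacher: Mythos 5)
Your argument is correct and follows essentially the same route as the paper, which states the proposition merely as a summary of the preceding discussion: (a) and (b) are read off from the shape of the coniveau spectral sequence (or, as you do for (a), from Grothendieck vanishing on the noetherian space $X$ --- either one-liner works), and (c) combines the Bloch--Ogus identification of the $E_2$-term with the compatibility of the $E_1$-differential with the Kato residue maps (Remarks 3.1 and 3.3). Nothing further is needed.
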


\section{Finiteness results for unramified cohomology}\label{S-4}

In this section, we apply the finiteness results of \S \ref{S-2} to the Kato and Bloch-Ogus constructions discussed in \S \ref{S-3} to prove Theorem \ref{T-MainThm}.

%obtain some finiteness statements for the unramified cohomology of smooth algebraic varieties defined over fields of type $\Fp.$

We begin with the case of $X = C$ a smooth, geometrically integral curve over a field $F$. Notice that by Proposition \ref{P-BlochOgusSS}(a), we have $E_2^{p,q} = 0$ for $p \neq 0,1$ and all $q.$ Consequently, for each $i \geq 1,$ there is a short exact sequence
$$
0 \to E_2^{1, i-1} \to E^i \to E_2^{0,i} \to 0
$$
(see, e.g., \cite[Ch. XV, \S 5, Proposition 5.5]{CartEil}). Applying Proposition \ref{P-BlochOgusSS}(c), we thus obtain a surjection
\begin{equation}\label{E-EtUnram}
H^i_{\text{\'et}}(C, \Z / m \Z (j)) \twoheadrightarrow H^i_{\text{ur}} (F(C), \Z / m \Z (j))
\end{equation}
for all $i \geq 1$ and all $m$ invertible in $F$.

\vskip2mm

Now, if $F = K$ is a field of type $\Fpm$ (see \S \ref{S-2}), then (\ref{E-EtUnram}) and Corollary \ref{C-FinEt} yield

\begin{prop}\label{P-4.1}
Let $K$ be a field of type $\Fpm$ for an integer $m \geq 1$ invertible in $K$, and $C$ be a smooth, geometrically integral curve over $K$. Then the
unramified cohomology groups $H^i_{\mathrm{ur}} (K(C), \Z/ m \Z (j))$ are finite for all $i \geq 1.$
\end{prop}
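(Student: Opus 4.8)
The plan is simply to combine the surjection (\ref{E-EtUnram}) with the étale finiteness of Corollary \ref{C-FinEt}; in fact most of the argument is already carried out in the paragraph preceding the statement. First I would recall why the Bloch-Ogus spectral sequence collapses to a two-step filtration in the curve case: since $\dim C = 1$, Proposition \ref{P-BlochOgusSS}(a) forces $E_2^{p,q} = 0$ for $p \neq 0, 1$, so (\ref{E-BO}) degenerates and yields, for each $i \geq 1$, a short exact sequence $0 \to E_2^{1,i-1} \to H^i_{\text{\'et}}(C, \Z/m\Z(j)) \to E_2^{0,i} \to 0$. By Proposition \ref{P-BlochOgusSS}(c) the quotient $E_2^{0,i}$ is precisely $H^i_{\text{ur}}(K(C), \Z/m\Z(j))$, which gives the surjection (\ref{E-EtUnram}). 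Nothing new is needed for this part.

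The second and final step is the deduction itself. Since $K$ is of type $\Fpm$ with $m$ invertible in $K$, and $C$ is a geometrically integral algebraic variety over $K$, Corollary \ref{C-FinEt} applies and shows that $H^i_{\text{\'et}}(C, \Z/m\Z(j))$ is finite for all $i$ and $j$. A surjective image of a finite group is finite, so (\ref{E-EtUnram}) forces $H^i_{\text{ur}}(K(C), \Z/m\Z(j))$ to be finite for every $i \geq 1$, which is the assertion. Worth noting: Corollary \ref{C-FinEt} requires only geometric integrality of $C$, and the Bloch-Ogus reduction above uses only smoothness together with the vanishing $E_2^{p,q} = 0$ for $p > \dim C$; in particular no properness hypothesis on $C$ is invoked.

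I do not expect a genuine obstacle inside this proposition — it is a one-line consequence of results already in hand. The substantive work lies upstream: it is the finiteness statement of Corollary \ref{C-FinEt}, which rests on Proposition \ref{P-FieldF'} and hence ultimately on the Bloch-Kato conjecture (Theorem \ref{T-BlochKato}) — used to show that $K_i^M(K')/m$, and therefore $H^i(K', \Z/m\Z(1))$, is finite whenever $K'^{\times}/{K'^{\times}}^m$ is — combined with the Hochschild-Serre spectral sequence to descend from $\bar{K}$ to $K$. Granting those inputs, the present proof is immediate.
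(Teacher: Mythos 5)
Your proposal is correct and follows essentially the same route as the paper: the degeneration of the Bloch--Ogus spectral sequence for a curve gives the surjection $H^i_{\text{\'et}}(C, \Z/m\Z(j)) \twoheadrightarrow H^i_{\mathrm{ur}}(K(C), \Z/m\Z(j))$, and Corollary \ref{C-FinEt} then yields finiteness. Your remarks on which hypotheses are actually used (no properness needed) and where the real work lies (in Proposition \ref{P-FieldF'} via Bloch--Kato) match the paper's structure exactly.
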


For smooth algebraic varieties $X$ of dimension $\geq 2$, the edge map in (\ref{E-EtUnram}) is in general no longer surjective.
%in general, one no longer has surjection as in (\ref{E-EtUnram}).
However, in this case, the finiteness of unramified cohomology in degree 3 is closely related to the question of finite generation of the Chow group $CH^2 (X)$ of codimension 2 cycles modulo rational equivalence.
%We sketch the main points of this connection for the reader's convenience.

First, we note the following general fact about first quadrant spectral sequences satisfying condition (b) of Proposition \ref{P-BlochOgusSS}.

\begin{lemma}\label{L-SpecSeqLemma}
Let $E_2^{p,q} \Rightarrow E^{p+q}$ be a first quadrant spectral sequence. Assume that $E_2^{p,q} = 0$ for $p > q$. Then there is an exact sequence
$$
E^3 \stackrel{e}{\longrightarrow} E_2^{0,3} \stackrel{d_2}{\longrightarrow} E_2^{2,2} \stackrel{f}{\longrightarrow} E^4.
$$

\end{lemma}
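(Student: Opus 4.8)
The plan is to run the standard filtration-and-differential bookkeeping for the first-quadrant spectral sequence $E_2^{p,q} \Rightarrow E^{p+q}$, using the hypothesis $E_2^{p,q} = 0$ for $p > q$ to annihilate all the subquotients except the two that appear in the asserted sequence. Recall that $E^n$ carries a decreasing filtration $E^n = F^0 E^n \supseteq F^1 E^n \supseteq \cdots \supseteq F^{n+1} E^n = 0$ with $F^p E^n / F^{p+1} E^n \cong E_\infty^{p,n-p}$, that $e$ is the edge map $E^n \twoheadrightarrow E^n/F^1 E^n \cong E_\infty^{0,n} \hookrightarrow E_2^{0,n}$ (the inclusion being legitimate because no differential enters column $0$), and that $f$ is the complementary edge map $E_2^{2,2} \twoheadrightarrow E_\infty^{2,2} \cong F^2 E^4 \hookrightarrow E^4$.

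First I would pin down $E_\infty^{0,3}$. The subquotients $E_\infty^{p,q}$ with $p+q = 3$ and $p \geq 2$ are subquotients of $E_2^{2,1} = 0$ and $E_2^{3,0} = 0$, so $F^2 E^3 = 0$ and hence $E^3 / F^1 E^3 \cong E_\infty^{0,3}$. Every differential leaving $E_r^{0,3}$ (for $r \geq 2$) lands in $E_r^{r,4-r}$, a subquotient of $E_2^{r,4-r}$, which vanishes for $r \geq 3$ since then $r > 4-r$; the only surviving one is $d_2 \colon E_2^{0,3} \to E_2^{2,2}$, and no differential enters $E_r^{0,3}$. Thus $E_\infty^{0,3} = E_3^{0,3} = \ker\big(d_2 \colon E_2^{0,3} \to E_2^{2,2}\big)$, so $\operatorname{im} e = \ker d_2$, which is exactness at $E_2^{0,3}$.

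Next I would analyze $E_2^{2,2}$ and $E^4$. Every differential leaving $E_r^{2,2}$ lands in $E_r^{2+r,3-r}$, a subquotient of $E_2^{2+r,3-r}$, which vanishes because $2+r > 3-r$; and the only differential entering $E_r^{2,2}$ from a possibly nonzero term is $d_2 \colon E_2^{0,3} \to E_2^{2,2}$ (for $r \geq 3$ the source $E_r^{2-r,r+1}$ sits in a negative column). Hence $E_\infty^{2,2} = E_3^{2,2} = \operatorname{coker}\big(d_2 \colon E_2^{0,3} \to E_2^{2,2}\big)$. On the other side, the subquotients $E_\infty^{p,4-p}$ with $p \geq 3$ are subquotients of $E_2^{3,1} = 0$ and $E_2^{4,0} = 0$, so $F^3 E^4 = 0$ and therefore $F^2 E^4 \cong E_\infty^{2,2}$, which gives the inclusion $E_\infty^{2,2} \hookrightarrow E^4$ defining $f$. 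Consequently $\ker f = \ker\big(E_2^{2,2} \twoheadrightarrow E_\infty^{2,2}\big) = \operatorname{im} d_2$, which is exactness at $E_2^{2,2}$. Splicing the two statements together yields the asserted exact sequence.

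I do not expect a genuine obstacle here: the argument is pure spectral-sequence bookkeeping, and the only thing that requires care is invoking each ``below the diagonal'' or ``negative column'' vanishing at the right place and checking that the three maps in the statement are indeed the two edge maps and the transgression $d_2$ (equivalently, one may simply \emph{define} $e$ and $f$ by the composites above). One could alternatively deduce the statement from the classical five-term exact sequence of terms of low degree after a re-indexing that shifts the vanishing region to a bottom row, but the direct computation above is shorter.
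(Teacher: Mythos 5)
Your proposal is correct and follows essentially the same route as the paper: the maps $e$, $d_2$, $f$ are defined exactly as in the text (edge map, transgression, and the composite $E_2^{2,2}\twoheadrightarrow E_\infty^{2,2}=F^2E^4\hookrightarrow E^4$, using $E_2^{3,1}=E_2^{4,0}=0$), and your differential bookkeeping just spells out the exactness check that the paper leaves as ``easily follows.''
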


\noindent (In the statement, we assume, as usual, that for each $n$, the filtration $ \{ F^p E^n \}$ on $E^n$ is such that $F^p E^n = 0$ for sufficiently large $p$ and $F^p E^n = E^n$ for sufficiently small $p$; consequently, since we are working with a first quadrant spectral sequence, we have $F^0 E^n = F^{-1} E^n = F^{-2}E^n = \cdots = E^n$ and $F^{n+1} E^n = F^{n+2} E^n = F^{n+3} E^n = \cdots = 0.$)

\vskip2mm

\noindent

We give the definitions of the maps for completeness; the proof of exactness then easily follows. We take
$e \colon E^3 \to E_2^{0,3}$ to be the usual edge map defined as the composition of the surjection $E^3 \twoheadrightarrow E^{0,3}_{\infty}$ followed by the inclusion $E^{0,3}_{\infty} \hookrightarrow E_2^{0,3}.$ The map $d_2$ is simply the differential of the spectral sequence. The map $f \colon E_2^{2,2} \to E^4$ is obtained as follows. Our assumption implies that
$$
F^3 E^4/ F^4 E^4 = E^{3,1}_{\infty} = 0 \ \ \ \text{and} \ \ \ F^4E^4/ F^5E^4 = E^{4,0}_{\infty} = 0.
$$
Consequently, $F^3 E^4 = F^4 E^4 = F^5 E^4 = \cdots = 0.$ Therefore, we have an inclusion $$E^{2,2}_{\infty} = F^2 E^4/F^3 E^4 = F^2 E^4 \hookrightarrow E^4.$$ Next, it is easy to see that $E_{\infty}^{2,2} = E_3^{2,2}$, and since $E_2^{4,1} = 0$, so that the differential $d_2 \colon E_2^{2,2} \to E_2^{4,1}$ is zero, we obtain a surjection $E_2^{2,2} \twoheadrightarrow E_{\infty}^{2,2}.$ The map $f$ is then defined as the composition
$$
E_2^{2,2} \twoheadrightarrow E_{\infty}^{2,2} \hookrightarrow E^4.
$$

\vskip2mm

Applying Lemma \ref{L-SpecSeqLemma} to the Bloch-Ogus spectral sequence (with $b =2$), we thus obtain, for any smooth algebraic variety $X$ over an arbitrary field $F$ and positive integer $m$ invertible in $F$, an exact sequence
\begin{equation}\label{E-BOChow1}
\he^3 (X, \Z / m \Z (2)) \to H^0 (X, \mathcal{H}^3(\Z / m \Z (2))) \to H^2 (X, \mathcal{H}^2 (\Z / m \Z (2))) \to \he^4 (X, \Z m \Z (2)).
\end{equation}
On the other hand, a well-known consequence of Quillen's proof of Gersten's conjecture in algebraic $K$-theory is the existence of an isomorphism
%we have the well-known identification (see, e.g., the proof of \cite[Theorem 7.7]{BO})
$$
H^2 (X, \mathcal{H}^2 (\Z / m \Z (2))) \simeq CH^2 (X)/m
$$
(see, e.g., the proof of \cite[Theorem 7.7]{BO}). Thus, using Proposition \ref{P-BlochOgusSS}(c), we may rewrite (\ref{E-BOChow1}) as
\begin{equation}\label{E-BOCHow2}
\he^3 (X, \Z / m \Z (2)) \to H^3_{\text{ur}} (F(X), \Z / m \Z(2)) \to CH^2 (X)/m \to \he^4 (X, \Z m \Z (2)).
\end{equation}
In view of Corollary \ref{C-FinEt}, we therefore have

\begin{prop}\label{P-4.3}
Let $K$ be a field of type $\Fpm$ for an integer $m \geq 1$ invertible in $K$, and $X$ a smooth, geometrically integral algebraic variety over $K$. Then the
unramified cohomology group $H^3_{\mathrm{ur}} (K(X), \Z/ m \Z (2))$ is finite if and only if $CH^2 (X)/m$ is finite. In particular, $H^3_{\mathrm{ur}} (K(X), \Z/ m \Z (2))$ is finite if $CH^2(X)$ is finitely generated.
\end{prop}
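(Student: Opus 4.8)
The plan is to read off Proposition~\ref{P-4.3} directly from the four-term exact sequence~(\ref{E-BOCHow2}) together with the finiteness of \'etale cohomology supplied by Corollary~\ref{C-FinEt}. By this stage all the substantive input has already been assembled --- the Bloch-Ogus spectral sequence and its vanishing features (Proposition~\ref{P-BlochOgusSS}), the identifications $H^0(X,\mathcal{H}^3(\Z/m\Z(2))) = H^3_{\mathrm{ur}}(K(X),\Z/m\Z(2))$ and $H^2(X,\mathcal{H}^2(\Z/m\Z(2))) \simeq CH^2(X)/m$, and Lemma~\ref{L-SpecSeqLemma} --- so what remains is essentially a short diagram chase.

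Concretely, abbreviate~(\ref{E-BOCHow2}) as
$$ A \xrightarrow{\alpha} B \xrightarrow{\beta} C \xrightarrow{\gamma} D, $$
with $A = \he^3(X,\Z/m\Z(2))$, $B = H^3_{\mathrm{ur}}(K(X),\Z/m\Z(2))$, $C = CH^2(X)/m$, and $D = \he^4(X,\Z/m\Z(2))$. First I would apply Corollary~\ref{C-FinEt} --- which is legitimate since $X$ is geometrically integral over a field of type $\Fpm$ with $m$ invertible in $K$ --- to conclude that $A$ and $D$ are finite. Exactness at $B$ gives $B/\im\alpha \simeq \im\beta \hookrightarrow C$, and $\im\alpha$ is a quotient of the finite group $A$, hence finite; therefore $B$ is finite if and only if $\im\beta$ is finite. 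Exactness at $C$ gives $\im\beta = \ker\gamma$, while $C/\ker\gamma \simeq \im\gamma \hookrightarrow D$ is automatically finite because $D$ is; therefore $C$ is finite if and only if $\ker\gamma = \im\beta$ is finite. Chaining the two equivalences yields that $B$ is finite if and only if $C$ is finite, i.e. that $H^3_{\mathrm{ur}}(K(X),\Z/m\Z(2))$ is finite if and only if $CH^2(X)/m$ is finite. For the final clause, if $CH^2(X)$ is finitely generated as an abelian group then $CH^2(X)/m$ is finite, and the equivalence just established gives finiteness of $H^3_{\mathrm{ur}}(K(X),\Z/m\Z(2))$.

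I do not expect a real obstacle at this point: the only things that require care are checking that the hypotheses of Corollary~\ref{C-FinEt} are met verbatim and keeping the images and kernels in the four-term sequence straight. One could, if desired, record the sharper observation that $H^3_{\mathrm{ur}}(K(X),\Z/m\Z(2))$ and $CH^2(X)/m$ differ only by subquotients of the finite groups $A$ and $D$, but this refinement is not needed for the statement as given.
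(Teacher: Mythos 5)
Your proposal is correct and is exactly the paper's argument: the paper deduces Proposition~\ref{P-4.3} from the four-term exact sequence~(\ref{E-BOCHow2}) (itself obtained from Lemma~\ref{L-SpecSeqLemma} applied to the Bloch--Ogus spectral sequence, together with the identifications of $E_2^{0,3}$ with unramified cohomology and $E_2^{2,2}$ with $CH^2(X)/m$) combined with the finiteness of $\he^3$ and $\he^4$ from Corollary~\ref{C-FinEt}. Your diagram chase simply makes explicit the routine step the paper leaves implicit, so there is nothing to correct.
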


\vskip2mm

\noindent The two statements of Theorem \ref{T-MainThm} are now contained in Propositions \ref{P-4.1} and \ref{P-4.3}, which concludes the proof.

\vskip5mm

\noindent {\bf Remark 4.4.} The question of the finite generation of $CH^2(X)$ (or, at least, the finiteness of $CH^2(X)/m$) is in general a wide-open problem; we mention a couple of cases where the affirmative answer is known.

\vskip2mm

\noindent (a) \parbox[t]{16cm}{(cf. \cite[\S 4.3]{CT-SB}) Let $K$ be a field of characteristic 0 of type $\Fpm$ for $m \geq 1$ and $X$ a smooth geometrically, integral algebraic variety of dimension $d$ over $K$. Write $\bar{X} = X \times_K \bar{K}$ and suppose there exists a dominant rational map
$$
\mathbb{A}_{\bar{K}}^{d-1} \times_{\bar{K}} C \dashrightarrow \bar{X},
$$
where $C/ \bar{K}$ is an integral curve. Then, using Corollary \ref{C-FinEt}, the argument in \cite[Theorem 4.3.7]{CT-SB} shows that $CH^2(X)/m$ is finite. One of the key points is that for any smooth algebraic variety $U$ over $K$, the group ${}_mCH^2(U)$ is finite: indeed, as observed in \cite[Corollaire 2]{CTSS}, the Merkurjev-Suslin theorem implies that ${}_mCH^2(U)$ is a subquotient of $\he^3 (U, \Z / m \Z (2)).$}

\vskip2mm

\noindent (b) \parbox[t]{16cm}{Let $X$ be a noetherian scheme. Recall that $CH_0(X)$ is defined as the cokernel of the natural map
$$
\bigoplus_{x \in X_1} K_1 (\kappa(x)) \to \bigoplus_{x \in X_0} K_0 (\kappa(x))
$$
induced by valuations (or equivalently, coming from the localization sequence in algebraic $K$-theory). One of the key results of higher-dimensional global class field theory is that if $X$ is a regular scheme of finite type over $\Z$, then $CH_0(X)$ is a finitely generated abelian group. This was initially established in the work of Bloch \cite{BlochCFT}, Kato and Saito \cite{Kato-Saito}, and Colliot-Th\'el\`ene, Soul\'e, and Sansuc \cite{CTSS}; a more recent treatment was given by Kerz and Schmidt \cite{KerzSchmidt} based on ideas of Wiesend. In particular, if $X$ is a smooth irreducible algebraic surface over a finite field, then $CH_0(X)$ is finitely generated.}

\vskip5mm

One thing to note is that the last example indicates a connection between finiteness properties of unramified cohomology and Bass's conjecture on the finite generation of algebraic $K$-groups. We conclude by briefly outlining the set-up and refer the reader to \cite{Geiss} for a detailed exposition. Let $X$ be a regular scheme, and denote by $K_i (X)$ the $i$-th Quillen $K$-group of $X$ (defined using either the category of vector bundles or the category of coherent sheaves on $X$). Motivated by Dirichlet's theorems on the finite generation of the unit group and the finiteness of the class group for number fields, Bass \cite{Bass} formulated

\vskip2mm

\noindent {\bf Conjecture 4.5.} {\it Let $X$ be a regular scheme of finite type over $\Z$. Then the groups $K_i (X)$ are finitely generated for all $i \geq 0.$}

\vskip2mm

This conjecture also has a motivic analogue. First, recall that
%More recently, work of Bloch-Lichtenbaum \cite{BL} (in the case of the spectrum of a field) and Friedlander-Suslin \cite{FS} (for general varieties) has established a precise connection between $K$-theory and motivic cohomology:
if $X$ is a smooth algebraic variety over a perfect field $F$, there is a fourth quadrant spectral sequence
$$
E_2^{p,q} = CH^{-q} (X, -p-q) \Rightarrow K_{-p-q}(X)
$$
(see \cite{BL} for the case where $X$ is the spectrum of a field, and \cite{FS} for the general case). The groups $CH^n (X,j)$ are Bloch's higher Chow groups \cite{Bloch}; it follows from their construction that $CH^n (X, 0) = CH^n(X)$ (the usual Chow group of codimension $n$ cycles modulo rational equivalence). Furthermore, as shown by Voevodsky \cite{V}, the higher Chow groups agree with the motivic cohomology groups introduced in his work on the Milnor conjecture. More precisely, if $X$ is a smooth algebraic variety over a perfect field, then
$$
H^i_{\mathcal{M}} (X, \Z (n)) \simeq CH^n (X, 2n - i).
$$
In view of these results, it is natural to formulate the following refinement of Bass's conjecture:

\vskip2mm

\noindent {\bf Conjecture 4.6.} {\it Let $X$ be a regular scheme of finite type over $\Z$. Then the groups $H^i_{\mathcal{M}} (X, \Z (n))$ are finitely generated.}

\vskip2mm

\noindent Though some progress has been made on these conjectures, definitive results are fairly limited. Of particular note is Quillen's analysis of the $K$-groups of rings of integers of number fields \cite{Quil1} and of finite fields \cite{Quil2}, which setted Conjecture 4.5 in these two cases.

%Of particular note are two key results due to Quillen: in \cite{Quil2}, he determined completely the structure of the $K$-groups of a finite field, and in \cite{Quil1}, he showed that the groups $K_i (\mathcal{O})$ for the ring of integers $\mathcal{O}$ in a number field are finitely generated, settling Conjecture 4.5 in these cases.

\addtocounter{thm}{3}

Regarding Conjecture 4.6, as a corollary of their proof of Kato's conjectures for smooth proper varieties over finite fields, Jannsen, Kerz, and Saito have obtained the following result:

\begin{thm}{\rm (cf. \cite{JS}, \cite{KS})}
Let $X$ be a smooth proper algebraic variety of dimension $d$ over a finite field $F$. Then the groups $CH^d(X,j)/m$ are finite for all integers $m$ invertible in $F$ and all $j.$
\end{thm}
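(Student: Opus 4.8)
The plan is to compare the higher Chow groups of $X$ with finite coefficients to the \'etale cohomology of $X$ (finite by Corollary~\ref{C-FinEt}) and to the Kato complex $C_m^{1,0}(X)$, whose homology is governed by the cohomological Hasse principle recalled in Remark~3.2(a). First I would pass to finite coefficients. Using the comparison $H^i_{\mathcal{M}}(X, \Z(n)) \simeq CH^n(X, 2n-i)$ quoted above, one has $CH^d(X,j) = H^{2d-j}_{\mathcal{M}}(X, \Z(d))$, so the universal coefficient sequence
$$0 \to H^{2d-j}_{\mathcal{M}}(X, \Z(d))/m \to H^{2d-j}_{\mathcal{M}}(X, \Z / m \Z (d)) \to {}_m H^{2d-j+1}_{\mathcal{M}}(X, \Z(d)) \to 0$$
reduces the problem to showing that $H^i_{\mathcal{M}}(X, \Z / m \Z (d))$ is finite for every $i$. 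I would also record that $H^i_{\mathcal{M}}(X, \Z(n)) = 0$ for $i > n + \dim X$, so that in particular $H^i_{\mathcal{M}}(X, \Z / m \Z (d)) = 0$ for $i > 2d$; thus only the range $i \leq 2d$ matters.

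Next I would invoke the Beilinson--Lichtenbaum theorem --- a consequence of the Bloch--Kato conjecture (Theorem~\ref{T-BlochKato}) through the work of Geisser--Levine and Suslin--Voevodsky --- which identifies the mod-$m$ motivic complex on $X_{\mathrm{Zar}}$ with $\tau_{\leq d} R\epsilon_* \Z / m \Z (d)$, where $\epsilon \colon \Xet \to X_{\mathrm{Zar}}$ is the change-of-topology morphism. The truncation triangle
$$\tau_{\leq d} R\epsilon_* \Z / m \Z (d) \longrightarrow R\epsilon_* \Z / m \Z (d) \longrightarrow \tau_{\geq d+1} R\epsilon_* \Z / m \Z (d) \longrightarrow [1]$$
then produces a long exact sequence relating $H^i_{\mathcal{M}}(X, \Z / m \Z (d))$, the \'etale group $\he^i(X, \Z / m \Z (d))$ (finite by Corollary~\ref{C-FinEt}), and the Zariski hypercohomology of $\tau_{\geq d+1} R\epsilon_* \Z / m \Z (d)$, so the whole matter comes down to this last term.

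The crucial point is that, since $X$ is smooth over the finite field $F$, an Artin-type affine vanishing argument (Artin vanishing over $\bar F$ combined with $\mathrm{cd}(F) = 1$) gives $\he^q(U, \Z / m \Z (d)) = 0$ for every smooth affine open $U \subset X$ and every $q > d+1$; hence the Zariski sheaves $\mathcal{H}^q(\Z / m \Z (d)) = R^q\epsilon_* \Z / m \Z (d)$ vanish for $q \geq d+2$, and therefore $\tau_{\geq d+1} R\epsilon_* \Z / m \Z (d) \simeq \mathcal{H}^{d+1}(\Z / m \Z (d))[-d-1]$. Its degree-$i$ hypercohomology is thus $H^{i-d-1}_{\mathrm{Zar}}(X, \mathcal{H}^{d+1}(\Z / m \Z (d)))$, which is the $E_2$-term $E_2^{i-d-1,\,d+1}$ of the Bloch--Ogus spectral sequence, and hence, by Remark~3.3 (with $b = d$), after the usual reindexing, the Kato homology group $H_{2d+1-i}(C_m^{1,0}(X))$. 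By Remark~3.2(a) this vanishes for $i \neq 2d+1$. Feeding this into the long exact sequence above yields an isomorphism $H^i_{\mathcal{M}}(X, \Z / m \Z (d)) \cong \he^i(X, \Z / m \Z (d))$ for all $i \leq 2d$; together with the vanishing $H^i_{\mathcal{M}}(X, \Z / m \Z (d)) = 0$ for $i > 2d$ from the first step, this shows that $H^i_{\mathcal{M}}(X, \Z / m \Z (d))$ is finite for every $i$, and the theorem follows.

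The genuinely hard content is imported: it is precisely the cohomological Hasse principle for smooth proper varieties over finite fields --- the theorem of Kerz--Saito, building on Jannsen--Saito, recalled in Remark~3.2(a) --- that forces $H_q(C_m^{1,0}(X)) = 0$ for $q \neq 0$, and once that, Beilinson--Lichtenbaum, and the \'etale finiteness of Corollary~\ref{C-FinEt} are available, the rest is formal. Within the argument itself the only steps demanding attention are the precise formulation of Beilinson--Lichtenbaum for the smooth variety $X$ (rather than for the residue fields $\kappa(x)$, which is the more elementary case directly packaged in Theorem~\ref{T-BlochKato}) and the careful bookkeeping of the index shifts and sign conventions in passing, via Remark~3.3, between the $E_2$-terms of the Bloch--Ogus spectral sequence and Kato homology.
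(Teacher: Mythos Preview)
The paper gives no proof of this theorem: it is simply quoted from \cite{JS} and \cite{KS} as a corollary of their resolution of Kato's conjecture, so there is no in-paper argument to compare against.

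Your outline is correct and is essentially the standard deduction of this finiteness from the Kerz--Saito theorem. The reduction via universal coefficients to mod-$m$ motivic cohomology, the invocation of Beilinson--Lichtenbaum to identify the motivic complex with $\tau_{\le d}R\epsilon_*\mu_m^{\otimes d}$, the observation (Artin vanishing plus $\mathrm{cd}(F)=1$) that the upper truncation reduces to the single sheaf $\mathcal H^{d+1}(\Z/m\Z(d))[-d-1]$, and the identification via Remark~3.3 of $H^{p}_{\mathrm{Zar}}(X,\mathcal H^{d+1}(\Z/m\Z(d)))$ with $H_{d-p}(C_m^{1,0}(X))$ are all in order; your index bookkeeping $E_2^{i-d-1,\,d+1}=H_{2d+1-i}(C_m^{1,0}(X))$ checks out, and feeding the vanishing from Remark~3.2(a) into the truncation long exact sequence indeed forces $H^i_{\mathcal M}(X,\Z/m\Z(d))\cong\he^i(X,\mu_m^{\otimes d})$ for $i\le 2d$. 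This is precisely the route taken in the cited references, so your proposal is in fact a faithful sketch of the proof the paper is pointing to rather than an alternative to it. The one caveat you already flag yourself---that Beilinson--Lichtenbaum for the variety $X$ (as opposed to the field case packaged in Theorem~\ref{T-BlochKato}) requires the Suslin--Voevodsky/Geisser--Levine machinery and is not proved in the paper---is the only place where you are importing something not made explicit in the text.
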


\bibliographystyle{amsplain}

\end{document}